\newtheorem{remark}{Remark}
\newtheorem{corollary}{Corollary}
\newtheorem{lemma}{Lemma}
\newtheorem{proof}{Proof}
\begin{document}

\title{Fair-MPC: a control-oriented framework for socially just decision-making}
\author[1]{Eugenia Villa}
\author[2]{Valentina Breschi}
\author[1]{Mara Tanelli}
\affil[1]{Department of Electronics, Information and Bioengineering, Politecnico di Milano, Milano, MI 
20133 ITA (e-mail: name.surname@polimi.it) }
\affil[2]{Department of Electrical Engineering, Eindhoven University of Technology, Eindhoven, MB 5600, NL (e-mail: v.breschi@tue.nl)}
\date{}
\maketitle

\begin{abstract}

Control theory can play a pivotal role in tackling many of the global challenges currently affecting our society, representing an actionable tool to help policymakers in shaping our future. At the same time, for this to be possible, elements of social justice must be accounted for within a control theoretical framework, so as not to exacerbate the existing divide in our society. In turn, this requires the formulation of new constraints and control objectives and their integration into existing or new control design strategies. Devising a formally sound framework to ensure social fairness can enable a leap in the comprehension of the meaning of such goals and their non-trivial relation with the usual notion of performance in control.
In this new and challenging context, we propose Fair-MPC, an economic model predictive control scheme to promote fairness in control design and model-based decision-making. Moving beyond traditional measures of performance, Fair-MPC enables the integration of social fairness metrics, articulated in \emph{equality} and \emph{equity}, allowing the design of optimal input sequences to trade-off fairness and efficiency, which are often in conflict. By establishing a theoretical link between Fair-MPC and the standard model predictive control framework (quadratic cost and linear constraints), we provide insights into the impact of social justice objectives on the final control action, which is ultimately assessed along the efficiency of the controller over two numerical examples.
\end{abstract}

\begin{keywords}
Fairness; Model Predictive Control; Societally-aware Decision-making
\end{keywords}

\section{Introduction}
\label{sec:introduction}
For a better future, the societal challenges to be faced will be profound and all measures taken to face them will have a decisive impact on our society. It is thus imperative also for the control community to put its knowledge to service, exploiting and conceiving new control theoretical tools to analyze, evaluate, and eventually mitigate the repercussions of the profound social transformations that such changes entail. Among the different opportunities for the control systems community to contribute to the solution of the global challenges our society is currently facing (see \cite{roadmap23}), recent works increasingly strive to investigate social aspects through control-theoretic lenses to achieve socially responsible control and coordination of complex systems. The first context in which this effort has been undertaken is the analysis of opinion formation in networked systems. As an example, the work in \cite{jadbabaie2022inference} investigates the influence of social pressure and inherent opinions on declared ones, having significant implications within legal and political frameworks. A focus on opinions generated from indecision is made in \cite{franci2022breaking}, where it is shown that understanding and modeling factors that influence the final choice between alternatives of (potentially) equal utility can lead to relevant advantages (see e.g., \cite{Stewart2003} and \cite{UN_report_21}). The effect of dynamics norms intended as global-scale trends are introduced in a novel formulation for opinion diffusion in \cite{zino2022facilitating}, enabling policy planners and practitioners to recalibrate their level of intervention to facilitate the achievement of the control objective. 
Societally aware elements are further incorporated into closed-loop strategies in the context of urban traffic and road pricing in, e.g., \cite{othman_analysis_2022,jalota2022creditbased}. According to the recent guidelines indicated in \cite{roadmap23}, it is particularly crucial to ensure that models and strategies that allow for the incorporation of societal drivers in the definition of closed-loop action simultaneously adhere to criteria of social fairness. The latter can be (and it will be here) intended as both equality of outcomes \cite{Hardt2016}, or equality of resources \cite{Arneson1989}. 

The impact of fairness in automated decision-making processes has been analyzed in different works, often leading to opposite conclusions. While \cite{GrgicHlaca2016TheCF} claims that fairness can be achieved through \textquotedblleft unawareness\textquotedblright, \cite{Dwork2011} advocates the paradigm of fairness \textquotedblleft through awareness\textquotedblright, showcasing the benefits of explicitly relying on protected attributes. In the context of multi-agent systems fairness issues are instead often tackled within the reinforcement learning framework, through the \textit{ad-hoc} adaptation of existing techniques. Examples range from the multi-objective deep reinforcement learning problem considered in \cite{WEYMARK1981}, concurrently aiming at a Pareto-optimal (i.e., \emph{efficient}) and \emph{impartial}, namely \emph{equal} and \emph{equitable} policy, to the hierarchical reinforcement learning scheme proposed in \cite{Jiang2019}. Game theoretic perspectives of fairness in the multi-agent context (with conflicting objectives) are provided in \cite{NIPS2014,Behrunani2023DesigningFI}. Despite the growing number of works that approach the issue of fairness in automated decision-making, this is still a problem unresolved for most of its parts  \cite{Chouldechova2018}. A key challenge that emerges from all these works specifically revolves around dynamically embedding fairness into the decision-making process in a quantitative way.
Indeed, as argued in \cite{DAmour2020FairnessIN}, most approaches focused on understanding the impact of unfairness and improving fairness in decision-making look at static or single-step settings only. As well known in the control context, this can have detrimental effects in the long run, with fairness criteria conceived for a static scenario not actually promoting fairness over time but rather jeopardizing it after all (see the discussion in \cite{Liu2018}). Only a few works aim at tackling the dynamic nature of fairness. \cite{Henzinger_2023} introduces techniques tailored for monitoring fairness in real-time, eventually allowing for the mitigation of long-run biases induced by fairness-oriented decision-making but not actively considering it in the design process. Instead, \cite{Creager2020} introduces a new modeling paradigm to achieve dynamic fairness intended as maximum proﬁt, demographic parity, and equal opportunity in decision-making processes. A reinforcement learning algorithm for recommendation capable of dynamically adjusting its policy to ensure that fairness requirements are consistently met is proposed in \cite{ge2021towards}. Again related to recommendation systems, \cite{morik2020controlling} proposes a strategy for fair dynamic ranking, \textit{i.e.,} the equal allocation of exposure based on item relevance. Instead, \cite{pagan2023classification} classifies feedback loops that can result from automated decisions and their impact on algorithmic fairness. 

Despite the richness of the topic, its natural fit for a control-oriented framework, and the need to include it to make control theory an actionable tool for a better future \cite{roadmap23}, there remains a scarcity of methodological works that focus on studying the integration of fairness criteria \textquotedblleft into loop\textquotedblright. 

\paragraph*{Contribution} Prompted by the importance of fairness in decision-making, and by the understanding of how dynamics and feedback play a crucial role in its (mis)propagation over time, in this work we aim to address the following question: \emph{Can control be fair?} To this end, to the best of our knowledge, we make the first attempt to encode two fairness principles (\emph{equality} and \emph{equity}) into feedback control, in a context where several agents have to accomplish an individual task (\emph{i.e.,} reach a desired steady state condition) by satisfying a set of local constraint competing for the same (limited) share of control resources. On the one hand, each system should thus attain its target for the control strategy to be \emph{effective}. Meanwhile, the shared control resources should be \emph{fairly} distributed among the agents so that the designed control policy is \emph{impartial}, while agents should attain their goal \emph{equally well} so that none of them is left behind. The first contribution of this work lies in the formalization of these contrasting objectives within a control-oriented framework formulating a \textquotedblleft fair\textquotedblright \ control design problem. A natural framework to cope with the constrained, multi-objective nature of this problem is that of economic model predictive control (MPC) \cite{ellis2017economic}, whose predictive essence allows to account for long-term fairness-induced effects at design time. Assuming that a centralized controller decides on the inputs fed to each agent and aims not to waste any control resource, our second contribution is thus the introduction of a novel economic MPC scheme (that we call from now on \emph{Fair-MPC}), which incorporates our definitions of performance, equality, and equity for the attainment of a \emph{fair control action}.  Based on our design choices, we can establish a direct link between the proposed Fair-MPC scheme and a traditional model predictive control one, providing insights into the impact of the fairness-induced terms on the final control action. Our formalization also allows us to devise a set of novel compact indexes to assess the level of fairness of the designed control action, along with an adaption strategy for the penalties of the fairness-driven elements in the cost, mitigating long-term undesired effects induced by them on the systems' behavior. The theoretical framework of economic MPC allows us to overcome the limitation of \cite{Wen2019} by shifting from a static to a dynamic framework. Besides, we present extensive numerical results toward answering another question, namely \emph{What is the impact of incorporating fairness principles in control?}. Our results spotlight the changes in the systems' responses due to introducing fairness into the control design logic. At the same time, we show the benefits of introducing an adaptation strategy for the penalties of the predictive problem by accounting for the inherent dynamical nature of fairness itself.  
This analysis shows that pursuing social justice along with more traditional control objectives opens a crucial discussion that must redefine the notion of \textit{optimality}, as potentially having brought all agents closer to their goal, with maybe only a few fully reaching it may have more value than having a good share of them fulfilling it but with exacerbated inequalities.

\paragraph*{Outline} The paper is organized as follows. Section~\ref{sec:setting} formalizes the setting and the main goal of the work. The fair model predictive control framework is introduced in Section~\ref{sec:FMPC}, along with some of its possible extensions, while its properties are discussed in Section~\ref{sec:properties}. A set of indexes to assess the performance of Fair-MPC and auto-tune the penalties of the associated cost are defined in Section~\ref{sec:practical}. The effects of introducing fairness in control and the effectiveness of these tuning strategies are then shown in Section~\ref{sec:examples} on two numerical examples. The paper concludes with some final remarks and directions for future work.
\paragraph*{Notation} Let $\mathbb{N}$, $\mathbb{R}$ and $\mathbb{R}^{+}$  be the set of natural, real and positive real numbers respectively. Moreover, let us indicate with $\mathbb{N}_{0}$ the set of natural numbers including zero. Denote with $\mathbb{R}^{n}$ the set of real column vectors of dimension $n$ and with $\mathbb{R}^{n \times m}$ the set of real matrices with $n$ rows and $m$ columns. The identity matrix of dimension $n\times n$ is indicated as $I_{n}$, while unitary vectors with $n$ columns are denoted as $\mathbbm{1}_{n}$. Given a vector $\mathbf{x} \in \mathbb{R}^{n}$, we denote its $i$-th component as $[\mathbf{x}]_{i}$, for $i=1,\ldots,n$, while we indicate its 1-norm as $\|\mathbf{x}\|_{1}$ and its 2-norm as $\|\mathbf{x}\|_{2}$. Given any matrix $\mathbf{Q} \in \mathbb{R}^{n \times n}$, $\mathbf{Q} \succeq 0$ ($\mathbf{Q} \succ 0$) indicate that it is positive semi-definite (positive definite). Given a set of matrices $\{\mathbf{Q}^{i}\}_{i=1}^{N}$, $\mathrm{diag}(\mathbf{Q}^{1},\ldots,\mathbf{Q}^{N})$ denotes the block-diagonal matrix, whose diagonal blocks correspond to the matrices in such set. Quadratic forms are indicated as $\|\mathbf{x}\|_{\mathbf{Q}}^{2}=\mathbf{x}^\top \mathbf{Q}\mathbf{x}$, with $\mathbf{x}^{\top}$ being the transpose of $\mathbf{x}$. Given a set $\mathcal{F}$, we denote its interior as $\mathrm{int}(\mathcal{F})$.
\section{Setting and goal}\label{sec:setting}

Consider a set of $N$ \emph{stabilizable} dynamical systems, each described by its own \emph{linear time invariant} (LTI) model
\begin{equation}\label{eq:dyn_sys}
\mathbf{x}_{t+1}^{i}=\mathbf{A}^{i}\mathbf{x}_{t}^{i}+\mathbf{B}^{i}\mathbf{u}_{t}^{i},
\end{equation}	
where $\mathbf{x}_{t}^{i} \!\in\! \mathbb{R}^{n}$ and $\mathbf{u}_{t}^{i} \!\in\! \mathbb{R}^{m}$ are the \emph{measurable} state and input of the $i$-th system at time $t \!\in\! \mathbb{N}_{0}$, respectively, while the matrices $A^{i} \!\in\! \mathbb{R}^{n\times n}$ and $B^{i} \!\in\! \mathbb{R}^{n\times m}$ inherently characterize its main dynamical features, with $i=1,\ldots,N$. Note that, despite their differences, we assume the inputs and the states of these systems to have the same \textquotedblleft meaning\textquotedblright, thus sharing the same definition and dimensions for them to be comparable. Suppose that these systems work within the same environment and that their operations are controlled by a \emph{centralized} control unit. Therefore, let us further define their ensemble dynamics as    \begin{subequations}\label{eq:ensemble_dyn}
    \begin{equation} \mathbf{x}_{t+1}=\mathbf{A}\mathbf{x}_{t}+\mathbf{B}\mathbf{u}_{t}=\mathbf{A}\begin{bmatrix}
    \mathbf{x}_{t}^{1}\\    
    \vdots\\
    \mathbf{x}_{t}^{N}
    \end{bmatrix}\!+\!\mathbf{B}\begin{bmatrix}
    \mathbf{u}_{t}^{1}\\
    \vdots\\
    \mathbf{u}_{t}^{N}
    \end{bmatrix},
    \end{equation}
    where $\mathbf{A} \in \mathbb{R}^{nN \times nN}$ and $\mathbf{B} \in \mathbb{R}^{nN \times mN}$ as
    \begin{equation}
\mathbf{A}=\mathrm{diag}\left(\mathbf{A}^{1},\ldots,\mathbf{A}^{N}\right),~~~\mathbf{B}\!=\!\mathrm{diag}\left(\mathbf{B}^{1},\ldots,\mathbf{B}^{N}\right).   
    \end{equation}
    \end{subequations}
Moreover, let us assume that each system pursues an individual goal, here represented by a specific equilibrium state $\mathbf{x}_{s}^{i} \in \mathbb{R}^{n}$ that each system targets, \emph{i.e.,}
\begin{equation}\label{eq:inputequilibrium_def}
\mathbf{x}_{s}^{i}=\mathbf{A}^{i}\mathbf{x}_{s}^{i}+\mathbf{B}^{i}\mathbf{u}_{s}^{i},
\end{equation}
where $\mathbf{u}_{s}^{i} \in \mathbb{R}^{m}$ is the corresponding equilibrium input. Further assume that this goal has to be achieved while the systems' inputs and states are subject to the following (local) polytopic constraints:
\begin{equation}\label{eq:constraints}
	\mathbf{u}_{t}^{i} \in \mathcal{U}^{i},~~~\mathbf{x}_{t}^{i} \in \mathcal{X}^{i}, \forall t \in \mathbb{N}_{0},
\end{equation}
with $\mathcal{U}^{i} \subseteq \mathbb{R}^{m}$, $\mathcal{X}^{i} \subseteq \mathbb{R}^{n}$ defined such that $\mathbf{u}_{s}^{i} \in \mathrm{int}(\mathcal{U}^{i})$ and $\mathbf{x}_{s}^{i} \in \mathrm{int}(\mathcal{X}^{i})$, for $i=1,\ldots,N$. Meanwhile, suppose that systems have to share the same resources to achieve their objectives, with both positive and negative control actions having a non-negative weight on the count of the total effort. In turn, this constraint limits the entity of the local control actions $\mathbf{u}_{t}^{i}$ as follows:
\begin{equation}\label{eq:shared_res}
	\sum_{i=1}^{N}\|\mathbf{u}_{t}^{i}\|_{1} \leq \bar{U}_{t},
\end{equation} 
where $\bar{U}_{t} \in \mathbb{R}^{+}$ is the maximum control effort that the centralized unit can tolerate at time $t$. 

Within this framework, our objective is to devise a \emph{centralized} controller that \emph{optimally} steers the state of each system towards its target, while accounting for some \emph{fairness criteria} when \emph{allocating} the available resources. In particular, we focus on designing control policies that reward \emph{equality}, \emph{i.e.,} attaining an even distribution of the available control resources, and \emph{equity}, namely promoting systems to be comparably close to their individual targets, in spite of their differences.
 
\section{Introducing fairness in MPC}\label{sec:FMPC}

    To attain our goal, we formalize the following centralized \emph{Fair Model Predictive Control} (Fair-MPC) problem:
    \begin{subequations}\label{eq:F-MPC}
	\begin{align}
		& \underset{\tilde{u},\tilde{x},\varepsilon_{u},\varepsilon_{x}}{\min}~J_{L}^{\mathrm{fair}}(\mathbf{\tilde{u}},\mathbf{\tilde{x}},\bar{U}_{t},\mathbf{x}_{s})\!+\!V(\mathbf{\tilde{x}}_{L},\mathbf{\tilde{u}}_{L})+\!\lambda_{u}\varepsilon_{u}^{2}\!+\!\lambda_{x}\varepsilon_{x}^2 \label{eq:FMPC_cost}\\
		&~~~~\mbox{s.t. }~~\mathbf{\tilde{x}}_{k+1}=\mathbf{A}\mathbf{\tilde{x}}_{k}+\mathbf{B}\mathbf{\tilde{u}}_{k},~~~k=0,\ldots,L-1,\label{eq:FMPC_model1}\\
		& \qquad~~~ ~~ \mathbf{\tilde{x}}_{0}=\mathbf{x}_{t},\label{eq:FMPC_initcond}
    \end{align}
  \begin{align}
		&  \qquad ~~~~~ \mathbf{\tilde{u}}_{k} \in \mathcal{U},~~\mathbf{\tilde{x}}_{k} \in \mathcal{X},~~~~~~k=0,\ldots,L,\label{eq:FMPC_indconstr}\\
		& \qquad ~~~~ \sum_{i=1}^{N} \|\mathbf{\tilde{u}}_{k}^{i}\|_{1} \leq \bar{U}_{t},~~~~~~~~~ k=0,\ldots,L,\label{eq:FMPC_alloconstr}\\
        & \qquad~~~ ~~\mathbf{\tilde{x}}_{L}=\mathbf{A}\mathbf{\tilde{x}_{L}}+\mathbf{B}\mathbf{\tilde{u}}_{L},\label{eq:FMPC_terminalsteady}\\ 
        & \qquad~~~~~\|\mathbf{\tilde{x}}_{L}\!-\!\mathbf{x}_{s}\|_{1}\!\leq\! \varepsilon_{x},~~~\|\mathbf{\tilde{u}}_{L}-\mathbf{u}_{s}\|_{1}\!\leq \!\varepsilon_{u},\label{eq:FMPC_terminalbound}\\ 
        &\qquad~~~~~\varepsilon_{x},\varepsilon_{u} \geq 0,
	\end{align}
\end{subequations}
where $L \geq 1$ is the user-defined prediction horizon, $\mathbf{u}_{s}$ and $\mathbf{x}_{s}$ stack the equilibrium inputs and states each system aim at attaining, respectively, the optimization variable $\mathbf{\tilde{u}}$ comprises all predicted inputs over $L$, namely $\mathbf{\tilde{u}}=\{\mathbf{\tilde{u}}_{k}\}_{k=0}^{L}$, while $\mathbf{\tilde{x}}=\{\mathbf{\tilde{x}}_{k}\}_{k=0}^{L}$ is composed by the associated states, predicted from the initial condition $\mathbf{\tilde{x}}_{0}$ according to the ensemble dynamics \eqref{eq:ensemble_dyn}. Since we aim at solving the Fair-MPC problem in a receding horizon fashion at each time step $t$, the constraint in \eqref{eq:FMPC_initcond} sets the initial state of each system over the current optimization window at its actual value. Meanwhile, $\mathcal{U}=\mathcal{U}^{1}\times \cdots \times \mathcal{U}^{N}$ and $\mathcal{X}= \mathcal{X}^{1}\times \cdots \times \mathcal{X}^{N}$, so that \eqref{eq:FMPC_indconstr} and \eqref{eq:FMPC_alloconstr} incorporate all the constraints of our control problem. Inspired by \cite{FAGIANO2013}, we enforce the two additional constraints in \eqref{eq:FMPC_terminalsteady} and \eqref{eq:FMPC_terminalbound} on the terminal states and inputs. Specifically, we impose that an equilibrium state is attained at the end of the prediction horizon, which can be nonetheless different from the targeted one. At the same time, we impose the difference between the desired target (\emph{i.e.,} the inputs and states satisfying \eqref{eq:inputequilibrium_def}) to be limited. Not to over-constraint the problem, this limitation is implemented by introducing two slack variables, whose value is minimized along with the fairness objective and controlled by the tunable parameters\footnote{These parameters should be chosen as large as possible, for the slacks to actually limit the distance between the equilibrium attained at the end of the horizon and the desired one.} $\lambda_{u},\lambda_{x}>0$. Moreover, the terminal cost is defined as the scaled quadratic form: 
\begin{equation}\label{eq:terminal_cost}
V(\mathbf{\tilde{x}}_{L},\mathbf{\tilde{u}}_{L})=\beta\ell^{\mathrm{fair}}(\mathbf{\tilde{x}}_{L},\mathbf{\tilde{u}}_{L}),
\end{equation}
where $\beta>0$ is an additional parameter to be calibrated and 
\begin{equation}\label{eq:FMPC_stagecost}
    \ell^{\mathrm{fair}}(\mathbf{\tilde{x}},\mathbf{u})=\|\mathbf{\tilde{x}}-\mathbf{x}_{s}\|_{\mathbf{\tilde{Q}}}^{2}+\sum_{i=1}^{N}\rho^{i}\left(\|\mathbf{\tilde{u}}^{i}\|_{1}-\frac{\bar{U}_{t}}{N}\right)^2,
\end{equation}
is the stage cost of the considered problem with the relative weights $\tilde{Q}\succeq 0$ and $\rho^{i}\geq 0$.
Instead, since our design objective is three-fold, the cost in \eqref{eq:FMPC_cost} is defined as the combination of three terms, namely:
\begin{equation}\label{eq:cost_divided}
	J_{L}^{\mathrm{fair}}(\mathbf{\tilde{u}},\mathbf{\tilde{x}},\bar{U}_{t},\mathbf{x}_{s})=J_{p}(\mathbf{\tilde{x}},\mathbf{x}_{s})+J_{u}(\mathbf{\tilde{u}},\bar{U}_{t})+J_{e}(\mathbf{\tilde{x}},\mathbf{x}_{s}),
\end{equation}
with $(i)$ $J_{p}(\mathbf{\tilde{x}},\mathbf{x}_{s})$ penalizing poor individual performance, $(ii)$ $J_{u}(\mathbf{\tilde{u}},\bar{U}_{t})$ weighting uneven shares of the available resources (equality) and $(iii)$ $J_{e}(\mathbf{\tilde{x}},\mathbf{x}_{s})$ rewarding comparable performance among the systems (equity). To fully characterize Fair-MPC, we now focus on the description of these three elements of the cost function.  
\subsubsection{Minding individual performance through $J_{p}(\mathbf{\tilde{x}},\mathbf{x}_{s})$} In our framework, each system is associated with an equilibrium point $\mathbf{x}_{s}^{i}$ that it aims at attaining. To account for this individual goal, the first term is chosen as
\begin{equation}\label{eq:performance_cost}
	J_{p}(\mathbf{\tilde{x}},\mathbf{x}_{s})=\sum_{i=1}^{N}\sum_{k=0}^{L-1}\|\mathbf{\tilde{x}}_{k}^{i}-\mathbf{x}_{s}^{i}\|_{\mathbf{Q}^{i}}^{2}.
\end{equation}
This formulation allow us to account for the distance of each system from its individual goal, while considering the level of importance attributed to the achievement of such target by the different systems through the personalized positive definite weight $\mathbf{Q}^{i}$, for $i=1,\ldots,N$. 
\subsubsection{Accounting for equality via $J_{u}(\mathbf{\tilde{u}},\bar{U}_{t})$} 
Differently from what is conventionally done in MPC, the penalty on individual performance in \eqref{eq:performance_cost} does not include a term weighting the distance between the control inputs and associated values at the equilibrium. This choice is mainly motivated by the fact that our goal is to select the optimal inputs mainly focusing toward guaranteeing \emph{equality} among the systems.

To this end, recall that the systems can perform their individual tasks with an input that is limited by the maximum allowable control effort $\bar{U}_{t}$, as formalized in \eqref{eq:FMPC_alloconstr}. For Fair-MPC to be actually fair, the latter should be equally distributed over the $N$ systems, so as to provide all of them with equal means to attain their goals. This requirement is embedded in Fair-MPC through the second term in the cost, which is formalized as follows:
\begin{equation}\label{eq:equality_cost}
	J_{u}(\mathbf{\tilde{u}},\bar{U}_t)=\sum_{i=1}^{N}\sum_{k=0}^{L-1} \rho^{i}\left(\|\mathbf{\tilde{u}}_{k}^{i}\|_{1}-\frac{\bar{U}_{t}}{N}\right)^2,
\end{equation}  
so as to guide the optimization of the local control actions towards the average of the available resources at step $t$, according to the relative importance weights $\rho^{i} \geq 0$. 

\begin{remark}[An outlook on $J_{u}(\mathbf{\tilde{u}},\bar{U}_{t})$]
	By leveraging \eqref{eq:shared_res}, the following relationship holds  
	\begin{equation*}
		\|\mathbf{\tilde{u}}_{k}^{i}\|_{1}-\frac{1}{N}\sum_{j=1}^{N}\|\mathbf{\tilde{u}}_{k}^{j}\|_{1} \geq \|\mathbf{\tilde{u}}_{k}^{i}\|_{1}-\frac{\bar{U}_{t}}{N}.
	\end{equation*}
	As such, $J_{u}(\mathbf{\tilde{u}},\bar{U}_{t})$ in \eqref{eq:equality_cost} allows one to implicitly minimize the difference between the individual control effort and the average effort over the whole set of systems. 
\end{remark}
\begin{remark}[Equality and optimal action]
    Even if we do not introduce a penalty on the distance between the optimal input and the equilibrium $\mathbf{u}_{s}$, the impact of the equality loss in \eqref{eq:equality_cost} can still be linked to the effect of such a penalty, as shown in Section~\ref{sec:properties}.
\end{remark}
\subsubsection{Incorporating equity through $J_{e}(\mathbf{\tilde{x}},\mathbf{x}_{s})$} For the optimal control action to be equitable, it should allow all the $N$ systems to comparably attain their individual goals. Given our assumptions, dissimilarities in the performance of the systems can be penalized by looking at the distance of each system from the desired equilibrium point $\mathbf{x}_{s}^{i}$. As a consequence, the third term in \eqref{eq:FMPC_cost} is formalized as follows:
\begin{equation}\label{eq:equity_cost}
	J_{e}(\mathbf{\tilde{x}},\mathbf{x}_{s})=\sum_{i=1}^{N}\sum_{k=0}^{L-1}\bigg\| \mathbf{\tilde{e}}_{k}^{i}-\frac{1}{N}\sum_{j=1}^{N}\mathbf{\tilde{e}}_{k}^{j}\bigg\|_{\mathbf{W}^{i}}^{2},
\end{equation}
where the individual weight $\mathbf{W}^{i} \succeq 0$ provides a loose indication on the local importance of enforcing equity, with $i=1,\ldots,N$, and $\mathbf{\tilde{e}}_{k}^{i}$ is the distance of the $i$-th system form the desired equilibrium at the $k$-th instant of the prediction horizon, \emph{i.e.,}
\begin{equation}
\mathbf{\tilde{e}}_{k}^{i}=\mathbf{\tilde{x}}_{k}^{i}-\mathbf{x}_{s}^{i},~~~~ i\!=\!1,\ldots,N,~~k\!=\!0,\ldots,L-1. 
 \end{equation}
Clearly, the aim of this third term of \eqref{eq:FMPC_cost} is to steer the systems to be equally close to complete their individual tasks throughout the prediction horizon.
\begin{remark}[On the Fair-MPC cost]
	Based on the definitions in \eqref{eq:performance_cost}-\eqref{eq:equity_cost}, the overall loss $\nonumber J_{L}^{\mathrm{fair}}(\mathbf{\tilde{u}},\mathbf{\tilde{x}},\bar{U}_{t},\mathbf{x}_{s})$ in \eqref{eq:cost_divided} depends on three user-defined weights $\{\mathbf{Q}^{i},\rho^{i},\mathbf{W}^{i}\}_{i=1}^{N}$. The latter should be selected to trade-off between the different objectives of Fair-MPC, while scaling the terms in the cost to the same order of magnitude. Accordingly, we can rewrite $\rho^{i}$ and $\mathbf{W}^{i}$ as  
	\begin{subequations}\label{eq:scaled_weights}
	\begin{align}
		& \rho^{i}=\gamma_{u}\bar{\rho}^{i},\\
		& \mathbf{W}^{i}=\mathbf{\Gamma}_{e}\mathbf{\bar{W}}^{i},
	\end{align}
	\end{subequations}
	where $\gamma_{u}$ and $\mathbf{\Gamma}_{e}$ are the scaling factors and $\bar{\rho}^i$ and $\mathbf{\bar{W}}^{i}$ are the actual penalties on the relative importance of equity and equality with respect to tracking.   
\end{remark}
\subsection{A class based reformulation of Fair-MPC}
Assume that the $N$ systems characterizing the considered framework can be divided into $C$ classes based on their individual features, \emph{e.g.,} they share the same dynamics or the initial distance from their individual targets. Thanks to the flexibility of the Fair-MPC formulation, the differences between these subgroups of systems can be directly embedded within the objective function in \eqref{eq:FMPC_cost} by modifying the weights associated to the fairness-oriented terms in the cost. This leads to the following class-based reformulation of \eqref{eq:equality_cost} and \eqref{eq:equity_cost}:
\begin{subequations}\label{eq:class_based}
	\begin{equation} J_{u}(\mathbf{\tilde{u}},\bar{U}_{t})\!=\!\!\sum_{c=1}^{C}J_{u}^{c}(\mathbf{\tilde{u}},\bar{U}_{t}),~~J_{e}(\mathbf{\tilde{x}},\mathbf{x}_{s})=\!\sum_{c=1}^{C}J_{e}^{c}(\mathbf{\tilde{x}},\mathbf{x}_{s})
	\end{equation}
	where
		\begin{align}
		& J_{u}^{c}(\mathbf{\tilde{u}},\bar{U}_{t})=\sum_{i \in \mathcal{C}^{c}}\sum_{k=0}^{L-1}\rho^{c}\left(\|\mathbf{\tilde{u}}_{k}^{i}\|_{1}-\frac{\bar{U}_{t}}{N}\right)^2, \label{eq:equality_costC}\\
		& J_{e}^{c}(\mathbf{\tilde{x}},\mathbf{x}_{s})=\sum_{i \in \mathcal{C}^{c}}\sum_{k=0}^{L-1} \bigg\| \mathbf{\tilde{e}}_{k}^{i}-\frac{1}{N}\sum_{j=1}^{N}\mathbf{\tilde{e}}_{k}^{j}\bigg\|_{\mathbf{W}^{c}}^{2}, \label{equity_costC}
	\end{align}
	while $\mathcal{C}^{c}$ indicates the $c$-th cluster of systems, and the weights $\rho^{c}$ and $\mathbf{W}^{c}$ are now class-dependent (rather than system-dependent) penalties, for $c=1,\ldots,C$.
\end{subequations}  
\subsection{On the allocation constraint}
The Fair-MPC formulation in \eqref{eq:F-MPC} allows us to consider two different scenarios with respect to the allocation constraint in \eqref{eq:FMPC_alloconstr}. Indeed, the problem can be solved when the bound on the control effort is constant at all times, \emph{i.e.,}
\begin{equation}\label{eq:input_1}
	\bar{U}_{t}=\bar{U},~~\forall t,
\end{equation} 
or when it adapts over time according to the effort required at the previous time instants, namely
\begin{equation}\label{eq:input_2}
	\begin{cases}
		\bar{U}_{t}=\bar{U}, \mbox{ if } t=0,\\
		\bar{U}_{t}=\bar{U}_{t-1}-\sum_{i=1}^{N}\|\mathbf{u}_{t-1}^{i}\|_{1}, \mbox{ otherwise}.
	\end{cases}
\end{equation}
These two choices entail the availability of either an unlimited or a limited amount of control resources over time, respectively. Indeed, when the scenario described by \eqref{eq:input_1} is considered, the limitless resources available are uniformly issued over time. On the other hand, \eqref{eq:input_2} accounts for the consumption of the available resources at previous time instants, leading to the progressive exhaustion of $\bar{U}$. At the same time, note that \eqref{eq:FMPC_alloconstr} does not allow us to explicitly account for the consumption of control resources when designing the input. The latter can be considered replacing \eqref{eq:FMPC_alloconstr} with the constraint
\begin{subequations}\label{eq:FMPC_alloconstr2}
\begin{equation}
    \sum_{i=1}^{N} \|\mathbf{\tilde{u}}_{k}^{i}\|_{1} \leq \tilde{U}_{k},
\end{equation}
and introducing the dynamics of resource consumption, \emph{i.e.,}
\begin{equation}
    \tilde{U}_{k+1}=\tilde{U}_{k}-\sum_{i=1}^{N}\|\mathbf{\tilde{u}}_{k}^{i}\|_{1},~~~\tilde{U}_{0}=\bar{U}_{t},
\end{equation}
with $\tilde{U}_{k}$ becoming an additional optimization variable of Fair-MPC, for $k=0,\ldots,L-1$.
\end{subequations}

\section{Properties of Fair-MPC}\label{sec:properties}
In this section, we analyze the properties of the Fair-MPC scheme in \eqref{eq:F-MPC}, by initially focusing on its component $J_{e}(\mathbf{\tilde{x}},\mathbf{x}_{s})$ enforcing equity. As formalized in the following Lemma, this term of the cost simply acts as a change on the penalty characterizing the performance loss in \eqref{eq:performance_cost}. 
\begin{lemma}[Impact of equality]\label{lemma:equality}
    Let $\mathbf{S}^{i} \in \mathbb{R}^{n \times nN}$ be a matrix defined as follows:
    \begin{subequations}\label{eq:S_def}
    \begin{equation}
        \mathbf{S}^{i}=\begin{bmatrix}
            \mathbf{S}_{1}^{i} & \mathbf{S}_{2}^{i} & \cdots & \mathbf{S}_{N}^{i}
        \end{bmatrix},
    \end{equation}
    with
    \begin{equation}
        \mathbf{S}_{j}^{i}=\begin{cases}
        \frac{N-1}{N}I_{n} \mbox{ if } i=j,\\
        -\frac{1}{N}I_{n} \mbox{ if } i\neq j,
        \end{cases}~~\forall i,j \in \{1,\ldots,N\}.
    \end{equation}
    \end{subequations}
    Then, the overall cost in \eqref{eq:cost_divided} can be recast as
    \begin{subequations}\label{eq:cost_pure_tracking_plus_equality}
    \begin{equation}
    J_{L}^{\mathrm{fair}}(\mathbf{\tilde{u}},\mathbf{\tilde{x}},\bar{U}_{t},\mathbf{x}_{s})=\tilde{J}_{p}(\mathbf{\tilde{x}},\mathbf{x}_{s})+J_{u}(\mathbf{\tilde{u}},\bar{U}_{t}),
    \end{equation}
    where the equivalent performance loss $\tilde{J}_{p}(\mathbf{\tilde{x}},\mathbf{x}_{s})$ is given by
    \begin{equation}
        \tilde{J}_{p}(\mathbf{\tilde{x}},\mathbf{x}_{s})=\sum_{k=0}^{L-1}\|\mathbf{\tilde{x}}_{k}-\mathbf{x}_{s}\|_{\tilde{\mathbf{Q}}}^{2},
    \end{equation}
    with
    \begin{equation}\label{eq:new_Qweight}
        \tilde{\mathbf{Q}}=\mathrm{diag}(\mathbf{Q}^{1},\ldots,\mathbf{Q}^{N})+\sum_{i=1}^{N}(\mathbf{S}^{i})^{\top}\mathbf{W}^{i}\mathbf{S}^{i} \succ 0,
    \end{equation}
    and $\{\mathbf{Q}^{i}\}_{i=1}^{N}$ being the penalties characterizing the performance-oriented cost in \eqref{eq:performance_cost}.
    \end{subequations}
\end{lemma}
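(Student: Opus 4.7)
The plan is to observe that every term in $J_e(\mathbf{\tilde{x}},\mathbf{x}_{s})$ and $J_p(\mathbf{\tilde{x}},\mathbf{x}_{s})$ depends on the stacked tracking error $\mathbf{\tilde{e}}_k=\mathbf{\tilde{x}}_k-\mathbf{x}_s$, and to fold the equity penalty into a modified quadratic form on this error. The matrices $\mathbf{S}^{i}$ in \eqref{eq:S_def} are precisely designed so that the centered error of system $i$ admits a compact linear representation, since
\begin{equation*}
\mathbf{S}^{i}\mathbf{\tilde{e}}_{k}
=\tfrac{N-1}{N}\mathbf{\tilde{e}}_{k}^{i}-\tfrac{1}{N}\!\!\sum_{j\neq i}\mathbf{\tilde{e}}_{k}^{j}
=\mathbf{\tilde{e}}_{k}^{i}-\tfrac{1}{N}\sum_{j=1}^{N}\mathbf{\tilde{e}}_{k}^{j},
\end{equation*}
so the first step is to rewrite the summand in \eqref{eq:equity_cost} as $\|\mathbf{S}^{i}\mathbf{\tilde{e}}_{k}\|_{\mathbf{W}^{i}}^{2}=\mathbf{\tilde{e}}_{k}^{\top}(\mathbf{S}^{i})^{\top}\mathbf{W}^{i}\mathbf{S}^{i}\mathbf{\tilde{e}}_{k}$.

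Next, I would sum the equity contributions over $i$ and interchange the summation order to obtain
\begin{equation*}
J_{e}(\mathbf{\tilde{x}},\mathbf{x}_{s})=\sum_{k=0}^{L-1}\mathbf{\tilde{e}}_{k}^{\top}\!\Bigl(\sum_{i=1}^{N}(\mathbf{S}^{i})^{\top}\mathbf{W}^{i}\mathbf{S}^{i}\Bigr)\mathbf{\tilde{e}}_{k}.
\end{equation*}
In parallel, $J_{p}(\mathbf{\tilde{x}},\mathbf{x}_{s})$ is recognized as $\sum_{k}\mathbf{\tilde{e}}_{k}^{\top}\mathrm{diag}(\mathbf{Q}^{1},\ldots,\mathbf{Q}^{N})\mathbf{\tilde{e}}_{k}$, because $\|\mathbf{\tilde{x}}_{k}^{i}-\mathbf{x}_{s}^{i}\|_{\mathbf{Q}^{i}}^{2}$ assembles block-diagonally on the stacked error. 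Adding the two then yields the quadratic form $\sum_{k}\|\mathbf{\tilde{e}}_{k}\|_{\tilde{\mathbf{Q}}}^{2}$ with $\tilde{\mathbf{Q}}$ exactly as in \eqref{eq:new_Qweight}. Since $J_{u}(\mathbf{\tilde{u}},\bar{U}_{t})$ does not involve the state error, substituting the rearranged expression into \eqref{eq:cost_divided} gives the claimed decomposition \eqref{eq:cost_pure_tracking_plus_equality}.

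The final step is to verify $\tilde{\mathbf{Q}}\succ 0$. Each $\mathbf{Q}^{i}\succ 0$ by the standing assumption stated after \eqref{eq:performance_cost}, so $\mathrm{diag}(\mathbf{Q}^{1},\ldots,\mathbf{Q}^{N})\succ 0$; meanwhile, $\sum_{i}(\mathbf{S}^{i})^{\top}\mathbf{W}^{i}\mathbf{S}^{i}\succeq 0$ as a sum of congruences of positive semidefinite $\mathbf{W}^{i}$'s. The sum of a positive definite and a positive semidefinite matrix is positive definite, which closes the argument.

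The whole proof is essentially a change-of-basis/bookkeeping exercise; the only genuinely delicate point is checking that the coefficients of $\mathbf{S}^{i}$ reproduce the centering operator $\mathbf{\tilde{e}}_{k}^{i}-\tfrac{1}{N}\sum_{j}\mathbf{\tilde{e}}_{k}^{j}$ exactly, after which the rest is algebra plus a standard positive-definiteness observation.
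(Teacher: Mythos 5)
Your proof is correct and follows essentially the same route as the paper's: both rewrite the centered error as $\mathbf{S}^{i}(\mathbf{\tilde{x}}_{k}-\mathbf{x}_{s})$, absorb the equity term into the quadratic weight $\tilde{\mathbf{Q}}$, and conclude positive definiteness from $\mathrm{diag}(\mathbf{Q}^{1},\ldots,\mathbf{Q}^{N})\succ 0$ plus the positive semidefinite congruence sum. Your version is, if anything, slightly more explicit about verifying the coefficients of $\mathbf{S}^{i}$, which the paper leaves as \textquotedblleft grouping terms.\textquotedblright
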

\begin{proof}
    Consider the $i$-th term of \eqref{eq:equity_cost}, namely
    \begin{equation*}
        \left\|\mathbf{e}_{k}^{i}-\frac{1}{N}\sum_{j=1}^{N}\mathbf{\tilde{e}}_{j}\right\|_{\mathbf{W}^{i}}^{2}\!\!\!\!\!\!=\left\|(\mathbf{\tilde{x}}_{k}^{i}-\mathbf{x}_{s}^{i})-\frac{1}{N}\sum_{j=1}^{N}(\mathbf{\tilde{x}}_{k}^{j}-\mathbf{x}_{s}^{j})\right\|_{\mathbf{W}^{i}}^{2}\!\!\!\!\!\!\!.
    \end{equation*}
    By grouping all terms depending on the difference $\mathbf{\tilde{x}}_{k}^{i}-\mathbf{\tilde{x}}_{s}^{i}$, the latter can be equivalently recast as
    \begin{align}
    \nonumber \left\|\mathbf{e}_{k}^{i}-\frac{1}{N}\sum_{j=1}^{N}\mathbf{\tilde{e}}_{j}\right\|_{\mathbf{W}^{i}}^{2}\!\!\!\!\!&=\|\mathbf{S}^{i}(\mathbf{\tilde{x}}_{k}-\mathbf{x}_{s})\|_{\mathbf{W}^{i}}^{2}\\
    &=\|\mathbf{\tilde{x}}_{k}-\mathbf{x}_{s}\|_{(\mathbf{S}^{i})^{\top}\mathbf{W}^{i}\mathbf{S}^{i}}^{2},
    \end{align}
    where $\mathbf{S}^{i}$ is defended as in \eqref{eq:S_def}. The result in \eqref{eq:cost_pure_tracking_plus_equality} follows straightforwardly. Note that, the block matrix $\mathbf{Q}$ is positive definite by construction. Meanwhile, the second term of $\mathbf{\tilde{Q}}$ is positive semi-definite as sum of positive semi-definite quadratic forms. Therefore $\mathbf{\tilde{Q}}$ is \eqref{eq:new_Qweight} is positive definite, thus concluding the proof.
\end{proof}
As expected and, as implicitly chosen by design, equity directly influences the component of the cost that weights individual performance by changing its penalty to promote a fairer balance between individual and ensemble target achievements. In turn, this implies that, besides the term promoting equality, the Fair-MPC cost is purely performance-oriented. Given this result, we now look closely at the equality term (see \eqref{eq:equality_cost}), toward establishing a relationship between the Fair-MPC cost in \eqref{eq:cost_divided} when $\rho^{i}>0$ and the standard loss of MPC, \emph{i.e.,}
\begin{equation}\label{eq:MPC_cost}
J_{L}(\mathbf{\tilde{u}},\mathbf{\tilde{x}},\mathbf{u}_{s},\mathbf{x}_{s})\!=\!\sum_{i=1}^{N}\!\sum_{k=0}^{L-1}\!\left[\|\mathbf{\tilde{x}}_{k}^{i}\!-\!\mathbf{x}_{s}^{i}\|_{\mathbf{\tilde{Q}}^{i}}^{2}\!+\!\|\mathbf{\tilde{u}}_{k}^{i}\!-\!\mathbf{u}_{s}^{i}\|_{\mathbf{{R}}^{i}}^{2}\right]\!,
\end{equation}
where $\mathbf{\tilde{Q}}^{i} \in \mathbb{R}^{n \times n}$ is the $i$-th block of $\mathbf{\tilde{Q}}$ defined as in \eqref{eq:new_Qweight}, while  $\mathbf{{R}}^{i} \in \mathbb{R}^{m \times m}$ only assumed for the moment to be positive definite, for all $i=1,\ldots,N$. We can now formalize the link between MPC with quadratic cost and Fair-MPC. 
\begin{lemma}[MPC \emph{vs} Fair-MPC]\label{lemma:fMPCvsMPC}
    Let $\mathbf{{R}}_{i}$ in \eqref{eq:MPC_cost} be 
    \begin{equation}\label{eq:R_def}
        \mathbf{{R}}^{i}=m\rho^{i}I_{m}\succ 0,
    \end{equation}
    with $\rho^{i}>0$ being the equality penalty in \eqref{eq:equality_cost} and $m$ being the input dimension. Then
    \begin{subequations}\label{eq:inequality}
    \begin{equation}
    J_{L}^{\mathrm{fair}}(\mathbf{\tilde{u}},\mathbf{\tilde{x}},\bar{U}_{t},\mathbf{x}_{s})\leq J_{L}(\mathbf{\tilde{u}},\mathbf{\tilde{x}},\mathbf{u}_{s},\mathbf{x}_{s})\!+\!\Delta(\mathbf{u}_{s},\bar{U}_{t}),~\forall \mathbf{\tilde{u}},\mathbf{\tilde{x}},
    \end{equation}
    with
    \begin{equation}
\Delta(\mathbf{u}_{s},\bar{U}_{t})=mL\sum_{i=1}^{N}\rho^{i}\left\|\mathbf{u}_{s}^{i}-\frac{\bar{U}_{t}}{mN}\mathbbm{1}_{m} \right\|_{2}^{2}.
    \end{equation}
    \end{subequations}
\end{lemma}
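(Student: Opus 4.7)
The plan is to leverage Lemma~\ref{lemma:equality} to collapse the LHS to $\tilde{J}_p+J_u$ and then show that the equality penalty $J_u$ can be upper bounded, term by term, by the quadratic tracking penalty on $\tilde{u}$ (with weight $\mathbf{R}^i=m\rho^i I_m$) plus the constant $\Delta(\mathbf{u}_s,\bar{U}_t)$. First, I would apply Lemma~\ref{lemma:equality} to write $J_L^{\mathrm{fair}}(\mathbf{\tilde{u}},\mathbf{\tilde{x}},\bar{U}_t,\mathbf{x}_s)=\tilde{J}_p(\mathbf{\tilde{x}},\mathbf{x}_s)+J_u(\mathbf{\tilde{u}},\bar{U}_t)$, and observe that the state-dependent quadratic term in \eqref{eq:MPC_cost} matches $\tilde{J}_p$ through the identification of $\mathbf{\tilde{Q}}^i$ as the $i$-th diagonal block of $\mathbf{\tilde{Q}}$ from \eqref{eq:new_Qweight}. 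Hence the claim reduces to the input-only inequality $J_u(\mathbf{\tilde{u}},\bar{U}_t)\leq \sum_{i,k}\|\mathbf{\tilde{u}}_k^i-\mathbf{u}_s^i\|_{\mathbf{R}^i}^2+\Delta(\mathbf{u}_s,\bar{U}_t)$.

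Next I would set $\mathbf{v}:=\frac{\bar{U}_t}{mN}\mathbbm{1}_m \in \mathbb{R}^m$, so that $\|\mathbf{v}\|_1=\bar{U}_t/N$, and rewrite each summand of $J_u$ as $\rho^i(\|\mathbf{\tilde{u}}_k^i\|_1-\|\mathbf{v}\|_1)^2$. Two standard inequalities then handle the $1$-norm difference: the reverse triangle inequality gives $(\|\mathbf{\tilde{u}}_k^i\|_1-\|\mathbf{v}\|_1)^2\leq \|\mathbf{\tilde{u}}_k^i-\mathbf{v}\|_1^2$, and the norm equivalence $\|\mathbf{w}\|_1\leq\sqrt{m}\,\|\mathbf{w}\|_2$ on $\mathbb{R}^m$ upgrades this to $\rho^i(\|\mathbf{\tilde{u}}_k^i\|_1-\|\mathbf{v}\|_1)^2\leq m\rho^i\|\mathbf{\tilde{u}}_k^i-\mathbf{v}\|_2^2$. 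This is precisely the form needed to expose a squared Euclidean distance to the ``fair share'' $\mathbf{v}$, which can then be compared to the equilibrium input $\mathbf{u}_s^i$.

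The third step is to split $\mathbf{\tilde{u}}_k^i-\mathbf{v}=(\mathbf{\tilde{u}}_k^i-\mathbf{u}_s^i)+(\mathbf{u}_s^i-\mathbf{v})$, apply the triangle inequality in $\|\cdot\|_2$ and square. Summing the resulting bound over $k=0,\dots,L-1$ and $i=1,\dots,N$, the $\mathbf{\tilde{u}}_k^i-\mathbf{u}_s^i$ contributions combine into $\sum_{i,k}\|\mathbf{\tilde{u}}_k^i-\mathbf{u}_s^i\|_{\mathbf{R}^i}^2$ through $\mathbf{R}^i=m\rho^i I_m$, while the $\|\mathbf{u}_s^i-\mathbf{v}\|_2^2$ contributions are $k$-independent and therefore pick up a factor $L$, recovering exactly $\Delta(\mathbf{u}_s,\bar{U}_t)=mL\sum_i\rho^i\|\mathbf{u}_s^i-\mathbf{v}\|_2^2$. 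Positive definiteness of $\mathbf{R}^i$ follows immediately from $\rho^i>0$, confirming \eqref{eq:R_def}.

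The main obstacle is controlling the cross term that appears when squaring the triangle inequality in the third step: a bare Young/AM-GM split introduces an extra factor of two on both the tracking and $\Delta$ contributions, which would inflate the stated constants. Achieving the exact coefficients in \eqref{eq:inequality} therefore hinges either on a sharper scalar argument (e.g., exploiting the sign structure of the cross term together with the allocation feasibility $\|\mathbf{\tilde{u}}_k^i\|_1\leq\bar{U}_t$ inherited from \eqref{eq:FMPC_alloconstr}), or on folding the surplus factor into the definitions of $\mathbf{R}^i$ and $\Delta$ via a rescaling; this is the only non-routine point in an otherwise triangle-inequality-and-norm-equivalence argument.
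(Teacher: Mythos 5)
Your plan is the same route the paper takes: absorb the equity term via Lemma~\ref{lemma:equality}, convert each summand of $J_u$ into $m\rho^i\|\mathbf{\tilde{u}}_k^i-\tfrac{\bar{U}_t}{mN}\mathbbm{1}_m\|_2^2$ through the reverse triangle inequality and $\|\cdot\|_1\le\sqrt{m}\|\cdot\|_2$, and then split the vector through $\mathbf{u}_s^i$. Your second step is in fact slightly more careful than the paper's: by applying the reverse triangle inequality in the $1$-norm you control the absolute value $\bigl|\|\mathbf{\tilde{u}}_k^i\|_1-\bar{U}_t/N\bigr|$, whereas the paper only bounds the signed difference from above before squaring it.

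The obstacle you flag in your last paragraph is genuine, and the paper does not resolve it. With $a=\mathbf{\tilde{u}}_k^i-\mathbf{u}_s^i$ and $b=\mathbf{u}_s^i-\tfrac{\bar{U}_t}{mN}\mathbbm{1}_m$, the paper's final step asserts $\|a+b\|_2^2\le\|a\|_2^2+\|b\|_2^2$ ``by Cauchy--Schwarz''; this silently drops the cross term $2a^{\top}b$, which has no definite sign here, so the step is invalid (Cauchy--Schwarz plus Young only gives $\|a+b\|_2^2\le 2\|a\|_2^2+2\|b\|_2^2$). The loss is not merely cosmetic: with $m=N=1$, $\bar{U}_t=2$, $u_s^i=1$ and $\tilde{u}_k^i=0$ (feasible under \eqref{eq:FMPC_alloconstr}), each equality summand equals $4\rho^i$ while the corresponding input penalty plus the per-step $\Delta$ contribution equals $2\rho^i$, so the stated constants in \eqref{eq:R_def}--\eqref{eq:inequality} cannot hold in general. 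Your ``rescaling'' option is the correct fix: the argument goes through verbatim with $\mathbf{R}^i=2m\rho^i I_m$ and $\Delta$ doubled, unless additional structure forcing $a^{\top}b\le 0$ is assumed. In short, the one step you could not justify is precisely the step the paper asserts without justification, and you were right to be suspicious of it.
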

\begin{proof}
    Let us consider the $i$-th term of \eqref{eq:equality_cost}, which in turn verifies
    \begin{align*}
        &\|\mathbf{\tilde{u}}_{k}\|_{1}\!-\!\frac{\bar{U}_{t}}{N}\leq\! \sqrt{m}\left(\|\mathbf{\tilde{u}}_{k}\|_{2}\!-\!\frac{\bar{U}_{t}}{\sqrt{m}N} \right) \!\leq \!\sqrt{m}\left|\|\mathbf{\tilde{u}}_{k}\|_{2}\!-\!\frac{\bar{U}_{t}}{\sqrt{m}N} \right|\!\\
        & \qquad \quad = \sqrt{m}\left|\|\mathbf{\tilde{u}}_{k}\|_{2}\!-\!\frac{\bar{U}_{t}}{mN}\|\mathbbm{1}_{m}\|_{2} \right| \leq \sqrt{m}\left\|\mathbf{\tilde{u}}_{k}^{i}\!-\!\frac{\bar{U}_{t}}{mN}\mathbbm{1}_{m} \right\|_{2}\!\!
    \end{align*}
    thanks to the properties of norms and the reverse triangle inequality. Based on the previous upper-bound and the Cauchy-Schwarz inequality, we can further show that
    \begin{align*}
&\rho^{i}\left(\!\|\mathbf{\tilde{u}}_{k}^{i}\|_{1}\!-\!\frac{\bar{U}_{t}}{N}\right)^{\!2}\!\!\leq\!\rho^{i} m \left(\left\|\mathbf{\tilde{u}}_{k}^{i}\!-\!\mathbf{u}_{s}^{i}\!+\!\mathbf{u}_{s}^{i}\!-\!\frac{\bar{U}_{t}}{mN}\mathbbm{1}_{m}\right\|_{2}^{2}\right)\\
&\qquad \qquad \quad \leq m\rho^{i}\left(\|\mathbf{\tilde{u}}_{k}^{i}\!-\!\mathbf{u}_{s}^{i}\|_{2}^{2}\!+\!\left\|\mathbf{u}_{s}^{i}\!-\!\frac{\bar{U}_{t}}{mN}\mathbbm{1}_{m}\right\|_{2}^{2}\right)\!.
    \end{align*}
    The result in \eqref{eq:inequality} easily follows, by noticing that the second term on the right-hand-side of the previous inequality is independent from the time step $k$.
\end{proof}
As a consequence of this lemma, Fair-MPC leads to an optimal control sequence that minimizes the standard MPC cost plus an additional term, provided that the two share the same terminal cost. The additional term is proportional to the distance between the inputs $\mathbf{u}_{s}^{i}$ at the equilibrium and the (normalized) resources one would allocate for the policy to be perfectly equal, for $i=1,\ldots,N$. This property allows us to formalize an additional result.
\begin{corollary}[Resources and optimal actions]
Assume that the $N$ systems described by \eqref{eq:dyn_sys} aim at attaining the same equilibrium point, namely $\mathbf{u}_{s}^{i}=\mathbf{\bar{u}}_{s}$ for all $i=1,\ldots,N$. When the bound on the control effort is selected based on such set point to satisfy
\begin{equation}  \bar{U}_{t}\mathbf{1}_{m}=mN\mathbf{\bar{u}}_{s},~~\forall t, 
\end{equation}
then the optimal sequence resulting from \eqref{eq:F-MPC} is also the optimal one for the MPC scheme with objective in \eqref{eq:MPC_cost}, terminal cost in \eqref{eq:terminal_cost}, and constraints equal to those of the Fair-MPC problem.
\end{corollary}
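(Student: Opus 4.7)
The plan is to specialize Lemma~\ref{lemma:fMPCvsMPC} to the corollary's setting and combine its pointwise inequality with a tightness argument at the Fair-MPC optimizer. As a first step, I would substitute the hypothesis $\bar{U}_{t}\mathbbm{1}_{m}=mN\bar{\mathbf{u}}_{s}$ into the correction term $\Delta(\mathbf{u}_{s},\bar{U}_{t})$ of Lemma~\ref{lemma:fMPCvsMPC}. Rearranging gives $\bar{\mathbf{u}}_{s}=(\bar{U}_{t}/(mN))\mathbbm{1}_{m}$, and since $\mathbf{u}_{s}^{i}=\bar{\mathbf{u}}_{s}$ for every $i$, each summand $\|\mathbf{u}_{s}^{i}-(\bar{U}_{t}/(mN))\mathbbm{1}_{m}\|_{2}^{2}$ in $\Delta$ vanishes, yielding $\Delta(\mathbf{u}_{s},\bar{U}_{t})=0$ at every $t$.

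With $\mathbf{R}^{i}=m\rho^{i}I_{m}$ as prescribed by Lemma~\ref{lemma:fMPCvsMPC}, the bound of that lemma then collapses to $J_{L}^{\mathrm{fair}}(\tilde{\mathbf{u}},\tilde{\mathbf{x}},\bar{U}_{t},\mathbf{x}_{s})\leq J_{L}(\tilde{\mathbf{u}},\tilde{\mathbf{x}},\mathbf{u}_{s},\mathbf{x}_{s})$ pointwise on the common feasible set of the two problems (they share the dynamics, initial condition, path and allocation constraints, terminal equilibrium and slack constraints, and terminal cost). Denoting a Fair-MPC optimizer by $\tilde{\mathbf{u}}^{\star}$, it would then suffice to establish the equality $J_{L}^{\mathrm{fair}}(\tilde{\mathbf{u}}^{\star})=J_{L}(\tilde{\mathbf{u}}^{\star})$ at this specific trajectory: the sandwich $J_{L}(\tilde{\mathbf{u}})\geq J_{L}^{\mathrm{fair}}(\tilde{\mathbf{u}})\geq J_{L}^{\mathrm{fair}}(\tilde{\mathbf{u}}^{\star})=J_{L}(\tilde{\mathbf{u}}^{\star})$, valid for every feasible $\tilde{\mathbf{u}}$, would identify $\tilde{\mathbf{u}}^{\star}$ as a minimizer of $J_{L}$ as well.

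The principal obstacle lies in this tightness step. The chain used in the proof of Lemma~\ref{lemma:fMPCvsMPC} relies on $\|\cdot\|_{1}\leq\sqrt{m}\|\cdot\|_{2}$ and a reverse triangle inequality, both strict for generic arguments. Under the corollary's hypotheses, however, $\bar{\mathbf{u}}_{s}$ has equal, strictly positive components, so these inequalities become tight precisely when $\tilde{\mathbf{u}}_{k}^{i,\star}$ is aligned componentwise with $\bar{\mathbf{u}}_{s}$. I would secure this alignment by exploiting the fact that the Fair-MPC input penalty depends on $\tilde{\mathbf{u}}_{k}^{i}$ only through $\|\tilde{\mathbf{u}}_{k}^{i}\|_{1}$: among Fair-MPC minimizers, one can always select a representative whose components are nonnegative and evenly distributed across the $m$ directions without changing the cost or violating the allocation constraint, thereby delivering the required tightness. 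A cleaner alternative would be a direct KKT comparison showing that, under $\Delta=0$, the two convex programs share their first-order stationarity conditions, so their optimizers coincide by convexity.
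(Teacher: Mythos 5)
Your opening steps coincide with the paper's entire argument: the paper's proof consists solely of the observation that the hypothesis forces $\Delta(\mathbf{u}_{s},\bar{U}_{t})=0$ in \eqref{eq:inequality}, after which the conclusion is declared to ``straightforwardly follow'' and the proof is omitted. You are right to be uneasy with that, and your diagnosis is correct: a one-sided bound $J_{L}^{\mathrm{fair}}(\tilde{\mathbf{u}},\tilde{\mathbf{x}},\bar{U}_{t},\mathbf{x}_{s})\leq J_{L}(\tilde{\mathbf{u}},\tilde{\mathbf{x}},\mathbf{u}_{s},\mathbf{x}_{s})$ on the common feasible set does not transfer minimizers from one problem to the other, and your sandwich argument correctly isolates the missing ingredient as tightness of the bound at a Fair-MPC optimizer, $J_{L}^{\mathrm{fair}}(\tilde{\mathbf{u}}^{\star})=J_{L}(\tilde{\mathbf{u}}^{\star})$.

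The problem is that your repair of the tightness step does not hold up. With $\mathbf{R}^{i}=m\rho^{i}I_{m}$ and $\mathbf{u}_{s}^{i}=\bar{\mathbf{u}}_{s}=\tfrac{\bar{U}_{t}}{mN}\mathbbm{1}_{m}$, the gap between the two costs is $\sum_{i,k}\rho^{i}\bigl[m\|\tilde{\mathbf{u}}_{k}^{i}-\bar{\mathbf{u}}_{s}\|_{2}^{2}-(\|\tilde{\mathbf{u}}_{k}^{i}\|_{1}-\bar{U}_{t}/N)^{2}\bigr]$, and each bracket vanishes only when both the norm-equivalence and the reverse-triangle inequalities are simultaneously tight, which forces $\tilde{\mathbf{u}}_{k}^{i,\star}-\bar{\mathbf{u}}_{s}$ to have components of equal magnitude and sign-consistent with $\bar{\mathbf{u}}_{s}$; a generic optimizer has no reason to satisfy this. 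Your proposed fix---replacing the optimizer by a representative with the same $\|\tilde{\mathbf{u}}_{k}^{i}\|_{1}$ but nonnegative, evenly distributed components---is not cost- or feasibility-preserving: the Fair-MPC objective depends on the inputs not only through $J_{u}$ in \eqref{eq:equality_cost} but also through the predicted states via the dynamics \eqref{eq:FMPC_model1}, so redistributing the components of $\tilde{\mathbf{u}}_{k}^{i}$ changes $\mathbf{B}^{i}\tilde{\mathbf{u}}_{k}^{i}$, hence the trajectory, the terms $J_{p}$ and $J_{e}$, the terminal equilibrium constraint, and possibly the state constraints. The KKT alternative is likewise unavailable as sketched: even with $\Delta=0$ the two objectives differ by the trajectory-dependent quantity above, so their stationarity conditions do not coincide, and $(\|\mathbf{u}\|_{1}-c)^{2}$ is not convex for $c>0$ (for $m=1$ it has two minimizers at $\pm c$), so the ``by convexity'' closing step fails too. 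In short, you have correctly exposed a gap that the paper itself glosses over, but your attempt to close it needs either an additional structural assumption on the optimal inputs (e.g., componentwise nonnegativity with equal components, under which the two stage costs coincide identically) or a weakening of the corollary's conclusion to the cost bound itself.
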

\begin{proof}
    The proof straightforwardly follows from \eqref{eq:inequality} and the fact that $\Delta(\mathbf{u}_{s},\bar{U}_{t})=0$. Therefore, it is omitted.
\end{proof}
Hence, when the designer has the freedom to choose the maximum control effort and all systems aim at the same target, the optimal control sequence obtained with Fair-MPC also optimizes the standard MPC loss in \eqref{eq:MPC_cost}, despite not explicitly introducing a penalty on the distance between the predicted input and its desired value at the targeted equilibrium. 

We now delve deeper into our choices for the terminal constraints, by recalling that the condition imposed in \eqref{eq:FMPC_terminalsteady} corresponds to one of the main ingredients of the generalized constraint proposed in \cite{FAGIANO2013}. The main difference between our formulation and the one proposed in \cite{FAGIANO2013} lays in the condition \eqref{eq:FMPC_terminalbound}, which replaces the constraint
\begin{subequations}\label{eq:generalized_constraint}
\begin{equation}\label{eq:generalized_constraint2}
    \ell^{\mathrm{fair}}(\mathbf{\tilde{x}}_{L},\mathbf{\tilde{u}}_{L}) \leq \bar{\ell}^{\mathrm{fair}}(t),
\end{equation}
where $\bar{\ell}^{\mathrm{fair}}(t)$ satisfies
\begin{equation}\label{eq:lbar_condition}
    \ell^{\mathrm{fair}}(\mathbf{x}_{s},\mathbf{u}_{s})\leq \bar{\ell}^{\mathrm{fair}}(t).
\end{equation}
\end{subequations}
Nonetheless, in the following we will prove that \eqref{eq:FMPC_terminalbound} (along with the conditions and penalties on the slack variables $\varepsilon_{u}$ and $\varepsilon_{x}$) implicitly guarantees the satisfaction \eqref{eq:generalized_constraint2} for a specific choice of the upper-bound $\bar{\ell}^{\mathrm{fair}}(t)$. To this end, let us define the stage costs characterizing the MPC loss in \eqref{eq:MPC_cost} as
\begin{equation}
\ell(\mathbf{\tilde{x}},\mathbf{\tilde{u}})=\|\mathbf{\tilde{x}}-\mathbf{x}_{s}\|_{\mathbf{\tilde{Q}}}^{2}+\|\mathbf{\tilde{u}}-\mathbf{u}_{s}\|_{\mathbf{R}}^{2},\label{eq:MPC_stagecost}
\end{equation}
with $\mathbf{\tilde{Q}}$ given by \eqref{eq:new_Qweight} and $\mathbf{{R}}$ defined as in \eqref{eq:R_def}. For the sake of showing the connection between the chosen terminal ingredients and the ones in \eqref{eq:generalized_constraint}, we now formalize the relationship between the fair stage cost in \eqref{eq:FMPC_stagecost} and the one in \eqref{eq:MPC_stagecost} at the end of the horizon.
\begin{lemma}[Fair-MPC \emph{vs} MPC (part II)]
Let the terminal stage costs $\ell(\mathbf{\tilde{x}}_{L},\mathbf{\tilde{u}}_{L})$ and $\ell^{\mathrm{fair}}(\mathbf{\tilde{x}}_{L},\mathbf{\tilde{u}}_{L})$ be defined as in \eqref{eq:MPC_stagecost} and \eqref{eq:FMPC_stagecost}, respectively. Then, for any feasible solution of problem \eqref{eq:F-MPC} the following holds:
    \begin{equation}\label{eq:stage_inequality}
        \ell^{\mathrm{fair}}(\mathbf{\tilde{x}}_{L},\mathbf{\tilde{u}}_{L})\leq \ell(\mathbf{\tilde{x}}_{L},\mathbf{\tilde{u}}_{L})+\frac{N^{2}-1}{N^{2}}\sum_{i=1}^{N}\rho^{i}\bar{U}_{t}^{2},
    \end{equation}
    where $\bar{U}_{t} \geq 0$ is the maximum amount of control resources at time $t$ (see \eqref{eq:FMPC_alloconstr}) and $\{\rho^{i}\}_{i=1}^{N}$ are the positive penalties of the equality enforcing loss in \eqref{eq:equality_cost}. 
\end{lemma}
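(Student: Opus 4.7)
The plan is to observe that both stage costs share the same state-tracking quadratic form $\|\mathbf{\tilde{x}}_L - \mathbf{x}_s\|_{\mathbf{\tilde{Q}}}^2$, so those contributions cancel and the inequality \eqref{eq:stage_inequality} reduces to a statement purely about the input-dependent pieces. Specifically, since $\|\mathbf{\tilde{u}}_L - \mathbf{u}_s\|_{\mathbf{R}}^2 \geq 0$, it is enough to prove the stronger inequality
\begin{equation*}
\sum_{i=1}^{N}\rho^{i}\left(\|\mathbf{\tilde{u}}_{L}^{i}\|_{1} - \frac{\bar{U}_{t}}{N}\right)^{2} \leq \frac{N^{2}-1}{N^{2}}\sum_{i=1}^{N}\rho^{i}\bar{U}_{t}^{2},
\end{equation*}
from which \eqref{eq:stage_inequality} follows by simply adding $\ell(\mathbf{\tilde{x}}_L, \mathbf{\tilde{u}}_L) \geq \|\mathbf{\tilde{x}}_L - \mathbf{x}_s\|_{\mathbf{\tilde{Q}}}^2$ on the right-hand side.

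To establish the reduced inequality, the next step is to exploit feasibility of $\mathbf{\tilde{u}}_L$ for \eqref{eq:F-MPC}: constraint \eqref{eq:FMPC_alloconstr} applied at $k=L$ yields $\sum_{j=1}^N \|\mathbf{\tilde{u}}_L^j\|_1 \leq \bar{U}_t$, and together with the nonnegativity of each $1$-norm this forces $\|\mathbf{\tilde{u}}_L^i\|_1 \in [0, \bar{U}_t]$ for every $i$. On this interval the scalar map $a \mapsto (a - \bar{U}_t/N)^2$ attains its maximum at one of the endpoints, giving the pointwise bound
\begin{equation*}
\left(\|\mathbf{\tilde{u}}_L^i\|_1 - \frac{\bar{U}_t}{N}\right)^2 \leq \max\!\left\{\left(\tfrac{\bar{U}_t}{N}\right)^{\!2}\!, \left(\tfrac{(N-1)\bar{U}_t}{N}\right)^{\!2}\right\} = \frac{(N-1)^2}{N^2}\bar{U}_t^2.
\end{equation*}
Weighting by $\rho^i \geq 0$ and summing yields $\sum_i \rho^i (\|\mathbf{\tilde{u}}_L^i\|_1 - \bar{U}_t/N)^2 \leq \tfrac{(N-1)^2}{N^2}\bar{U}_t^2 \sum_i \rho^i$.

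Finally, since $(N-1)^2 \leq (N-1)(N+1) = N^2 - 1$ for every integer $N \geq 1$, this last quantity is dominated by $\tfrac{N^2-1}{N^2}\sum_i \rho^i \bar{U}_t^2$, which closes the argument. The main conceptual step, and the only nonroutine one, is recognizing that the MPC input-tracking term $\|\mathbf{\tilde{u}}_L-\mathbf{u}_s\|_{\mathbf{R}}^2$ need not be used in the bookkeeping at all: its nonnegativity provides the slack that lets the feasibility-driven endpoint estimate yield the looser constant $(N^2-1)/N^2$ stated by the authors. Everything else is arithmetic together with the norm-interval bound inherited from \eqref{eq:FMPC_alloconstr}.
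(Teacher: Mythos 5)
Your proof is correct for $N\ge 2$ and, at the level of overall structure, follows the same route as the paper: both arguments cancel the common state-tracking term $\|\mathbf{\tilde{x}}_L-\mathbf{x}_s\|_{\mathbf{\tilde{Q}}}^2$, discard the nonnegative input-tracking term $\|\mathbf{\tilde{u}}_L-\mathbf{u}_s\|_{\mathbf{R}}^2$, and then bound the equality penalty $\sum_{i}\rho^{i}\left(\|\mathbf{\tilde{u}}_L^i\|_1-\bar{U}_t/N\right)^2$ using only the allocation constraint \eqref{eq:FMPC_alloconstr} at $k=L$. The difference lies in the final estimate, and it is not cosmetic. The paper expands $(a-b)^2\le a^2+b^2$ and then uses $\|\mathbf{\tilde{u}}_L^i\|_1\le\bar{U}_t$, which only yields the constant $\frac{N^2+1}{N^2}$ --- strictly weaker than the $\frac{N^2-1}{N^2}$ claimed in the lemma statement (the paper's subsequent discussion also uses $\frac{N^2+1}{N^2}$, so the statement and its own proof are mutually inconsistent, presumably a sign typo). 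Your endpoint argument --- maximizing $a\mapsto(a-\bar{U}_t/N)^2$ over $a\in[0,\bar{U}_t]$ --- gives the sharper constant $\frac{(N-1)^2}{N^2}\le\frac{N^2-1}{N^2}$ and therefore actually establishes the lemma as written, which the paper's chain of bounds does not. Two minor caveats: the identity $\max\left\{(\bar{U}_t/N)^2,\,((N-1)\bar{U}_t/N)^2\right\}=\frac{(N-1)^2}{N^2}\bar{U}_t^2$ requires $N\ge 2$ (for $N=1$ the other endpoint dominates, and the lemma's constant degenerates to zero, so that case should be excluded explicitly); and you should say explicitly that convexity of the quadratic is what licenses taking the maximum over the interval at its endpoints.
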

\begin{proof}
    By adding and subtracting to $\ell^{\mathrm{fair}}(\mathbf{\tilde{x}}_{L},\mathbf{\tilde{u}}_{L})$ the penalty on the deviation of inputs from the desired equilibrium value, it straightforwardly follows that
    \begin{equation}\label{eq:fair_gamma}
        \ell^{\mathrm{fair}}(\mathbf{\tilde{x}}_{L},\mathbf{\tilde{u}}_{L})=\ell(\mathbf{\tilde{x}}_{L},\mathbf{\tilde{u}}_{L})+\gamma({\tilde{u}}_{L},\mathbf{u}_{s},\bar{U}_{t}),
    \end{equation}
    where
    \begin{equation*}
    \gamma({\tilde{u}}_{L},\mathbf{u}_{s},\bar{U}_{t})\!=\!\!\sum_{i=1}^{N}\rho^{i}\!\!\left[\!\left(\|\mathbf{\tilde{u}}_{L}^{i}\|_{1}\!-\!\frac{\bar{U}_{t}}{N}\right)^{2}\!\!\!\!-\!m\|\mathbf{\tilde{u}}_{L}^{i}\!-\!\mathbf{u}_{s}\|_{2}^{2}\right]\!.
    \end{equation*}
    In turn, it is straightforward to see that the previous quantity can be upper-bounded as
        \begin{align}\label{eq:gamma_bound}
    \nonumber\gamma({\tilde{u}}_{L},\mathbf{u}_{s},\bar{U}_{t})&\leq\sum_{i=1}^{N}\rho^{i}\left(\|\mathbf{\tilde{u}}_{L}^{i}\|_{1}\!-\!\frac{\bar{U}_{t}}{N}\right)^{2}\\
   \nonumber &\leq \sum_{i=1}^{N}\rho^{i}\left(\|\mathbf{\tilde{u}}_{L}^{i}\|_{1}^{2}+\frac{\bar{U}_{t}^{2}}{N^{2}}\right)\\
    & \leq \frac{N^{2}+1}{N^{2}}\sum_{i=1}^{N}\rho^{i}\bar{U}_{t}^{2}
    \end{align}
    where the first bound is due to the fact the second set of the terms on the right-hand-side of the definition of $\gamma({\tilde{u}}_{L},\mathbf{u}_{s},\bar{U}_{t})$ is non-negative by construction, while the second one is a direct consequence of the non-negativity of the norms and of $\bar{U}_{t}$. The third bound is instead derived by the allocation constraint in \eqref{eq:FMPC_alloconstr}. The proof straightforwardly follows by combining \eqref{eq:fair_gamma} and \eqref{eq:gamma_bound}. 
\end{proof}
This inequality is not surprising, given the result in Lemma~\ref{lemma:fMPCvsMPC}. At the same time, it guarantees that any $\bar{\ell}^{\mathrm{fair}}(t)$ satisfying \eqref{eq:lbar_condition} and verifying
\begin{equation}
    \ell(\mathbf{\tilde{x}},\mathbf{\tilde{u}})+\frac{N^{2}+1}{N^{2}}\sum_{i=1}^{N}\rho^{i}\bar{U}_{t}^{2} \leq \bar{\ell}^{\mathrm{fair}}(t),
\end{equation}
allows for \eqref{eq:generalized_constraint2} to hold. Meanwhile, since the upper-bound in \eqref{eq:stage_inequality} holds for all states and inputs provided that they are feasible, it is easy to see that any 
\begin{equation}\label{eq:l_bardef}
\bar{\ell}^{\mathrm{fair}}(t)=\frac{N^{2}+1}{N^{2}}\sum_{i=1}^{N}\rho^{i}\bar{U}_{t}^{2}+\varepsilon,~~ \forall \varepsilon \geq 0,
\end{equation}
satisfies \eqref{eq:lbar_condition}, thus being a suitable candidate to impose \eqref{eq:generalized_constraint2}. For \eqref{eq:generalized_constraint2} to hold for the previous choice of $\bar{\ell}^{\mathrm{fair}}(t)$, we thus require that
\begin{equation}
    \ell(\mathbf{\tilde{x}}_{L},\mathbf{\tilde{u}}_{L})\leq \varepsilon,~~\varepsilon \geq 0,
\end{equation}
by relying on \eqref{eq:stage_inequality}. The previous condition can be readily translated into two constraints on the Euclidean distances between the terminal states/inputs and their target values
\begin{equation*}
    \|\mathbf{\tilde{x}}_{L}-\mathbf{x}_{s}\|_{2}\leq {\varepsilon}_{x},~~\|\mathbf{\tilde{u}}_{L}-\mathbf{u}_{s}\|_{2}\leq {\varepsilon}_{u},~~\varepsilon_{x},\varepsilon_{u}\geq 0,
\end{equation*}
which holds if the inequalities in \eqref{eq:FMPC_terminalbound} are satisfied thanks to the properties of norms, ultimately motivating our Fair-MPC formulation. We demand additional analysis on the properties deriving by this choice of terminal ingredients to future works.
\begin{remark}[Unfeasible targets]
When the desired set point does not comply with \eqref{eq:constraints}-\eqref{eq:shared_res}, then the bound in \eqref{eq:stage_inequality} becomes 
\begin{equation*}
  \ell^{\mathrm{fair}}(\mathbf{\tilde{x}}_{L},\mathbf{\tilde{u}}_{L})\leq \ell(\mathbf{\tilde{x}}_{L},\mathbf{\tilde{u}}_{L})+\sum_{i=1}^{N}\rho^{i}\left(\|\mathbf{\tilde{u}}_{L}^{i}\|_{1}^{2}+\frac{\bar{U}_{t}^{2}}{N^{2}}\right),    
\end{equation*}
and, thus 
\begin{equation}\label{eq:bound_unfeasible}
    \bar{\ell}^{\mathrm{fair}}(t)=\sum_{i=1}^{N}\rho^{i}\left(\|\mathbf{u}_{s}^{i}\|_{1}^{2}+\frac{\bar{U}_{t}^{2}}{N^{2}}\right)+\varepsilon,
\end{equation}
should hold for \eqref{eq:lbar_condition} to be satisfied. Since the last term in the new bound is positive definite, note that \eqref{eq:bound_unfeasible} is nothing but a specific case of \eqref{eq:l_bardef}, where one has an additional constraint on the magnitude of $\varepsilon$ (other than $\varepsilon \geq 0$).
\end{remark}
\section{Practical implementation of Fair-MPC}\label{sec:practical}
	As in standard MPC, the main tuning knobs of Fair-MPC are the weights $\mathbf{Q}^{i}$, $\rho^{i}$ and $\mathbf{P}^{i}$ characterizing its cost in \eqref{eq:cost_divided}, for $i=1,\ldots,N$. To select weights that are compliant with the relative importance of fairness with respect to individual performance, it is thus crucial to quantify the level of justice achieved through Fair-MPC. To this end, in this section we introduce a set of \emph{key performance indicators}, which can be useful to detect ineffective control actions in practice. These indexes are subsequently exploited to propose an automatic tuning strategy for the weights, allowing one to practically reduce the burden of the time-consuming hyper-parameters calibration phase while accounting for the time-varying nature of fairness (see \emph{e.g.,} \cite{DAmour2020FairnessIN,Liu2018}).
 	\subsection{Performance evaluation}\label{Subsec:performance_indexes}
 	The aim of Fair-MPC is to allow a group of systems to achieve their individual control goals, while accounting for the \emph{fairness} of the chosen control action. To understand if such a balance has been attained, it is thus fundamental to define a set of \emph{indicators} providing a quantitative assessment of these objectives. In what follows, we introduce the indexes we propose to assess the \emph{fairness} of Fair-MPC actions.
 	\subsubsection{Evaluating individual performance} 
 	We first introduce two indicators, respectively embedding the capabilities of Fair-MPC to allow each system to attain its target, and its effects on the time required for the individual tasks completion. To understand whether the control action steers the individuals to their final objective, we define the averaged indicator:
 	\begin{equation}\label{eq:tracking_index1}
 		\mathcal{H}_{s}=\frac{1}{T}\sum_{t=0}^{T-1}e^{-\frac{1}{N}\sum_{i=1}^{N}\|\mathbf{x}_{s}^{i}-\mathbf{x}_{t}^{i}\|_{2}} \in [0,1],
 	\end{equation}    
 	whose value is $1$ when all systems immediately reach their individual goals, while it decreases towards zero according to the average distance of the systems with respect to their target over time. Note that, this index can be easily defined for either a cluster of systems or a single individual, by properly manipulating the exponent in \eqref{eq:tracking_index1}. 
    
    Now, let $\tau^{i}$ be the average number of time instants required for the distance between the $i$-th system and the desired equilibrium to be at a pre-fixed percentage $\alpha_{\%}$ of its initial value, \emph{i.e.,}
    \begin{equation}\label{eq:tau}
 		\|\mathbf{x}_{s}^{i}-\mathbf{x}_{\tau^{i}}^{i}\|_{2} \leq \frac{\alpha_{\%}}{100} \|\mathbf{x}_{s}^{i}-\mathbf{x}_{0}^{i}\|_{2}.
 	\end{equation} 
    The performance index $	\mathcal{H}_{\tau} \in [0,1]$ associated with the average time required for the accomplishment of individual tasks is:   
 	\begin{equation}\label{eq:tracking_time}
 		\mathcal{H}_{\tau}=1-\frac{1}{T-1}\left(\frac{1}{N}\sum_{i=1}^{N} \tau^{i}\right).
 	\end{equation}
 	Note that, $\mathcal{H}_{\tau}$ is equal to $1$ only when the initial outputs correspond to the equilibrium, while it tends to $0$ as soon as the time to complete individual tasks approaches $T-1$ steps. 
    	\begin{remark}[Transient effects]\label{remark:constant_refs}
 		The value of $\mathcal{H}_{s}$ in \eqref{eq:tracking_index1} is heavily influenced by closed-loop transients. Nonetheless, this bias in the evaluation of Fair-MPC can be removed by replacing $\mathcal{H}_{s}$ in \eqref{eq:tracking_index1} with either
 		\begin{equation}\label{eq:tracking_index2}
 			\mathcal{H}_{s}=e^{-\frac{1}{N}\sum_{i=1}^{N}\|\mathbf{x}_{s}^{i}-\mathbf{x}_{T-1}^{i}\|_{2}},
 		\end{equation}    
 		focused on evaluating tracking performance at $T-1$, or
    	\begin{equation}\label{eq:tracking_index3}
 		\mathcal{H}_{s}=\frac{1}{T-\tau_{s}-1}\sum_{t=\tau_{s}}^{T-1}e^{-\frac{1}{N}\sum_{i=1}^{N}\|\mathbf{x}_{s}^{i}-\mathbf{x}_{t}^{i}\|_{2}},
 	\end{equation}    
        that results in an evaluation of performance starting from a user-defined instant $\tau_{s}$. When considering \eqref{eq:tracking_index2}, $\mathcal{H}_{s}$ is equal to $1$ if and only if all systems have achieved their objectives at the last time instant Fair-MPC has been deployed. Instead, with \eqref{eq:tracking_index2} $\mathcal{H}_{s}=1$ is attained when the individual objectives have been attained after $\tau_{s}$ steps.
 	\end{remark}
 	\subsubsection{Assessing equality}
 	The \emph{Jain} index \cite{jain1998} is a quantitative indicator of equality, often used when solving bandwidth allocation problems (see \emph{e.g., \cite{Attiah2018}}). Here we exploit this index to \emph{evaluate} the quality of Fair-MPC, by translating it to our context as follows:   
 	\begin{equation}\label{eq:Jain_index}
 		\mathcal{J}(\mathbf{u}_{t})=\frac{\left(\sum_{i=1}^{N}\|\mathbf{u}_{t}^{i}\|_{1}\right)^{2}}{N \cdot \left(\sum_{i=1}^{N} \left(\|\mathbf{u}_{t}^{i}\|_{1}\right)^{2}\right)} \in \left[\frac{1}{N},1\right],
 	\end{equation}   	
    toward accounting for the instantaneous equality of the control action. Note that, the minimum value of this index is attained when all systems but one evolve in open-loop (\emph{i.e.,} only one system benefits from the available resources). Meanwhile, $\mathcal{J}(\mathbf{u}_{t})$ becomes equal to 1 when the control actions of all systems are perfectly equal. 
    \begin{remark}[Jain index and equality loss]
        In designing the equality loss in \eqref{eq:equality_cost}, we aimed at penalizing more situations in which resources are not equally distributed, which would lead to lower Jain indexes, while promoting scenarios in which $\mathcal{J}(\mathbf{u}_{t})$ approaches 1 by distributing $\bar{U}_{t}$ equally and leveraging all available control resources. Therefore, the rationale behind $J_{u}(\mathbf{u},\bar{U})$ is aligned with that governing the Jain index, bridging between the performance index and the equality loss.
    \end{remark}

   Although allowing us to \textquotedblleft measure\textquotedblright \ fairness, the Jain index in \eqref{eq:Jain_index} is not yet comparable with those defined in \eqref{eq:tracking_index1}-\eqref{eq:tracking_time}. To overcome this issue, $\mathcal{J}(\mathbf{u}_{t})$ is scaled to lay in $[0,1]$, namely
 	\begin{equation}\label{eq:equality_indext}
 		\bar{\mathcal{J}}(\mathbf{u}_{t})=\frac{N \mathcal{J}(\mathbf{u}_{t})-1}{N-1},
 	\end{equation} 
 	and averaged over the $T$ steps Fair-MPC is applied. The \emph{equality} index  $\mathcal{H}_{u} \in [0,1]$ is therefore given by:
 	\begin{equation}\label{eq:equality_index}
 		\mathcal{H}_{u}=\frac{1}{T}\sum_{t=0}^{T-1} \bar{\mathcal{J}}(\mathbf{u}_{t}).
 	\end{equation}
 	We stress that, by definition, this performance indicator is referred to either the whole set of systems or to a subgroup, while it cannot be computed for a single system\footnote{Since $\mathcal{H}_{u}$ quantitatively assesses a group property, its computation for a single system would not provide any relevant insight on the fairness of the control policy.}.
 	\subsubsection{Checking equity}
 	To quantify the equity of the Fair-MPC strategy we directly exploit the same reasoning used to construct $J_{e}(\mathbf{\tilde{x}},\mathbf{x}_{s})$ in \eqref{eq:equity_cost}, namely we compare the individual errors $\mathbf{e}_{t}^{i}=\mathbf{x}_{s}^{i}-\mathbf{x}_{t}^{i}$, for $i=1,\ldots,N$. Specifically, let $\mathcal{E}(\mathbf{x}_{t},\mathbf{x}_{s})$ indicate the average deviation of the individual tracking errors with respect to the error mean, \emph{i.e.,}
 	\begin{equation}
 		\mathcal{E}(\mathbf{x}_{t},\mathbf{x}_{s})=\frac{1}{N}\sum_{i=1}^{N}\bigg\|\mathbf{e}_{t}^{i}-\frac{1}{N}\sum_{j=1}^{N} \mathbf{e}_{t}^{j} \bigg\|_{2}.
 	\end{equation} 
 	The instantaneous index of equity is accordingly defined as
 	\begin{equation}\label{eq:equity_indext}
 		\mathcal{J}_{e}(\mathbf{e}_{t})=e^{-\mathcal{E}(\mathbf{y}_{t},\mathbf{r}_{t})},
 	\end{equation}
 	whose value is equal to $1$ when Fair-MPC results into perfect equity, \emph{i.e.,} all the systems comparably attain their equilibrium states, while it decreases to value close to $0$ whenever there are strong disparities in the systems' state trajectories with respect to their individual goals. 
 	This index is then averaged over $T$ to obtain the ultimate \emph{equity index} $\mathcal{H}_{e} \in [0,1]$, given by:
 	\begin{equation}\label{eq:equity_index}
 		\mathcal{H}_{e}=\frac{1}{T} \sum_{t=0}^{T-1} \mathcal{J}(\mathbf{e}_{t}).
 	\end{equation}
 	As for the equality index in \eqref{eq:equality_index}, $\mathcal{H}_{e}$ loses meaning when computed for a single system, being linked to a group property.
	\subsection{Weight tuning in Fair-MPC}\label{subsec:weights}
	The scaling factors $\gamma_{u}$ and $\mathbf{\Gamma}_{e}$ in \eqref{eq:scaled_weights} are problem specific, \emph{i.e.,} they depend on the number of systems, the available control resources and the individual targets, but they are not linked to the relative importance of fairness with respect to individual performance. On the other hand, the scaled weights $\bar{\rho}^{i}$ and $\mathbf{\bar{W}}^{i}$ are directly associated with the significance of fairness in the design of the control action. As such, $\gamma_{u}$ and $\mathbf{\Gamma}_{e}$ can eventually be prefixed or computed at $t=0$, by looking at the difference in magnitude between the three terms in the cost when applying the first Fair-MPC action obtained with unitary weights and then they can be kept constant. Instead, $\bar{\rho}^{i}$ and $\mathbf{\bar{W}}^{i}$ should be carefully calibrated and (possibly) adapted.
	
	A possible strategy to adaptively tune these hyper-parameters while accounting for the fairness of the current control action can be devised by exploiting the performance indexes introduced in Section~\ref{Subsec:performance_indexes}. Indeed, by monitoring the evolution of the fairness criteria $\bar{\mathcal{J}}(\mathbf{u}_{t})$ and $\mathcal{J}_{e}(\mathbf{e}_{t})$ in \eqref{eq:equality_indext} and \eqref{eq:equity_indext}, respectively, the relative importance weights $\bar{\rho}_{t}^{i}$ and $\mathbf{\bar{W}}_{t}^{i}$ can be increased/decreased whenever one or both fairness principles are poorly/satisfactorily attained. Accordingly, these hyper-parameters can be selected to be inversely proportional to the associated performance index. At the same time, enforcing an even distribution of control resources can be counterproductive when systems progressively complete their tasks, as it naturally prevent those systems still far from their targets to receive the resources needed to attain their goals. 
 
 According to this reasoning, the weights can be defined as:
    \begin{subequations}\label{eq:varying_weights}
		\begin{align}
			& \bar{\rho}_{t}^{i}=\bar{\rho}_{t}=\begin{cases}
				\frac{1}{\bar{\mathcal{J}}(\mathbf{u}_{t})},~~\mbox{if } t \leq \bar{t},\\
				\frac{\bar{\rho}_{t-1}^{i}}{2}, \quad~~ \mbox{otherwise},	
			\end{cases}\label{eq:varying_weights1}\\
			& \mathbf{\bar{W}}_{t}^{i}=\mathbf{\bar{W}}_{t}=\frac{1}{\mathcal{J}_{e}(\mathbf{e}_{t})},\label{eq:varying_weights2}
		\end{align}
	\end{subequations}
	where $\bar{t}$ is a tunable parameter that can be selected based on the percentage of systems whose initial state lay within a prefixed error band with respect to their targets, or as a prefixed number of time steps. Note that this choice promotes penalties that generally weigh more poor performance in terms of equity rather than equality, due to the negative effect this last fairness criteria can have on individual performance in the long run.	
\section{Numerical examples}\label{sec:examples}
We now assess the effect of introducing fairness in a predictive control design routine through two numerical examples. By focusing on a simple two-system case, we discuss the effect induced on both individual performance and resource allocation by the progressive penalization of unfair control actions according to the proposed framework. On the same example, we further analyze the benefits of exploiting the fairness-oriented auto-tuning strategy proposed in Section~\ref{subsec:weights}. Fair-MPC is then applied to control the motion of a set of systems on a bi-dimensional plane. We initially consider again a simple two-system scenario, within which we specifically focus on assessing the effect that exhaustible resources have on closed-loop performance by leveraging \eqref{eq:input_2} (but not modifying the allocation constraint \eqref{eq:FMPC_alloconstr}). We then shift to a more challenging two-class case and $N=8$, to assess the closed-loop impact of the proposed penalties on equity and equality when reshaped as in \eqref{eq:class_based}. In all our examples $\mathbf{P}$ in \eqref{eq:terminal_cost} is set to $I_{nN}$, thus leaving the tunable parameter $\lambda$ as our degree of freedom to calibrate the terminal cost.

\subsection{A two-system example}

Consider $N=2$ first order systems, respectively characterized by the following dynamics:
\begin{equation}\label{eq:two_systems}
\mathcal{S}^{1}\!\!:~x_{t+1}^{1}=0.4x_{t}^{1}+0.1u_{t}^{1},~~~\mathcal{S}^{2}\!\!:~x_{t+1}^{2}=0.9x_{t}^{2}+0.1u_{t}^{2}.
\end{equation}   
Assume that they both share the same initial conditions $x_{0}^{1}=x_{0}^{2}=0$, while aiming at attaining the same equilibrium point $x_{s}^{1}=x_{s}^{2}=2$ within $T=20$ steps. To attain this goal, the parameters of Fair-MPC are selected as in \tablename{~\ref{tab:2systems_param}}, where the weights are equal for both systems and the upper-bound in \eqref{eq:shared_res} is assumed to be constant at all times. Note that, the chosen $\bar{U}$ does not allow the systems to attain their target by design (\emph{i.e.,} the chosen equilibrium is not a feasible point). Despite we know beforehand that the systems cannot attain their targets, this analysis allows us to clearly emphasize the effects that incorporating fairness in control has on closed-loop performance. In this scenario, we thus progressively introduce our fairness-oriented penalties into the control design problem, to gradually assessing their impact on the closed-loop.

\begin{table}[!hp]
	\caption{Two-system example: parameters of the Fair-MPC problem.}\label{tab:2systems_param}
	\centering
	\begin{tabular}{cccccccccc}
		  $L$ & $\bar{U}$ & $Q$ & $\bar{\rho}$ & $\gamma_{u}$ & $\bar{W}$ & $\Gamma_{e}$ & $\beta$ & $\lambda_x$ & $\lambda_u$ \\
		\hline 
		\hline
		  20 & 10 & 1 & 3 & $10^{-1}$ &  1 & $10$ & 0.1 & 0.1 & 0.1\\
		\hline
	\end{tabular}
\end{table}
\begin{figure}[!hp]
	\centering
	\begin{tabular}{c}
	\subfigure[Performance only: $J_{L}^{\mathrm{fair}}(\mathbf{\tilde{u}},\mathbf{\tilde{x}},\bar{U},\mathbf{x}_s)=J_{p}(\mathbf{\tilde{x}},\mathbf{x}_{s})$\label{fig:2sys_adding1}]{\begin{tabular}{cc}
		\includegraphics[width=0.40\columnwidth]{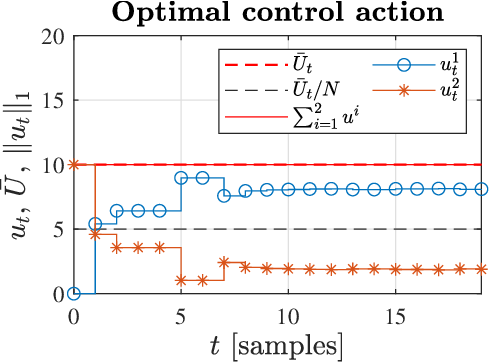} &
		\includegraphics[width=0.40\columnwidth]{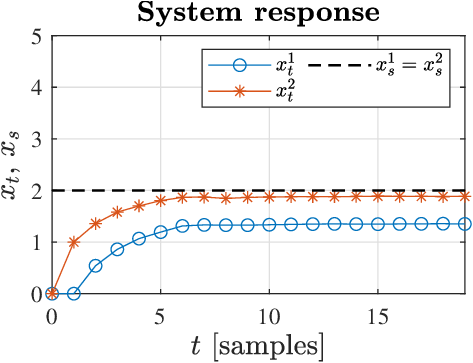}
	\end{tabular}}\\
	\subfigure[Performance+equality: $J_{L}^{\mathrm{fair}}(\mathbf{\tilde{u}},\mathbf{\tilde{x}},\bar{U},\mathbf{x}_s)=J_{p}(\mathbf{\tilde{x}},\mathbf{x}_{s})+J_{u}(\tilde{u},\bar{U})$\label{fig:2sys_adding2}]{\begin{tabular}{cc}
			\includegraphics[width=0.40\columnwidth]{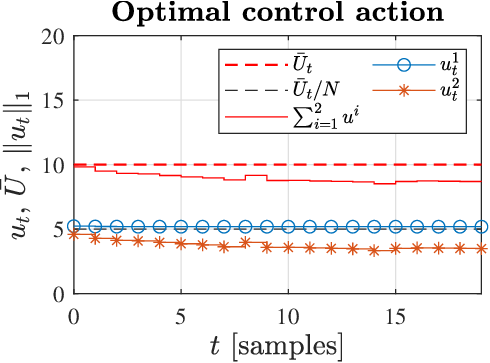} &
			\includegraphics[width=0.40\columnwidth]{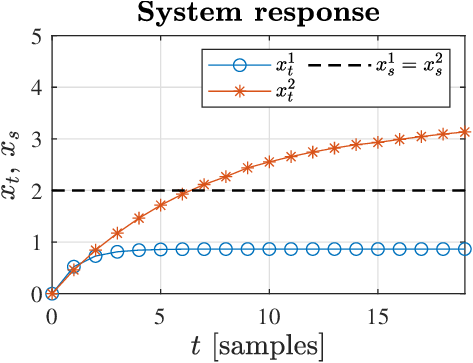}
	\end{tabular}}\\
	\subfigure[Performance+equity: $J_{L}^{\mathrm{fair}}(\mathbf{\tilde{u}},\mathbf{\tilde{x}},\bar{U},\mathbf{x}_s)=J_{p}(\mathbf{\tilde{x}},\mathbf{x}_{s})+J_{e}(\mathbf{\tilde{x}},\mathbf{x}_{s})$\label{fig:2sys_adding3}]{\begin{tabular}{cc}
			\includegraphics[width=0.40\columnwidth]{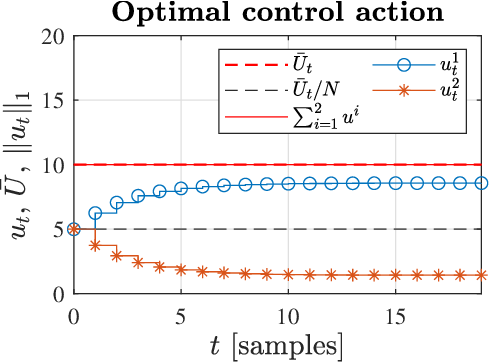} &
			\includegraphics[width=0.40\columnwidth]{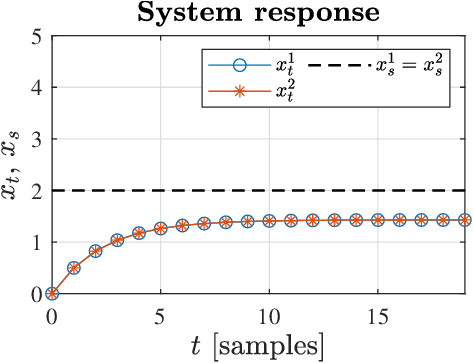}
	\end{tabular}}\\
	\subfigure[Fair-MPC: $J_{L}^{\mathrm{fair}}(\mathbf{\tilde{u}},\mathbf{\tilde{x}},\bar{U},\mathbf{x}_s)$ in \eqref{eq:cost_divided}\label{fig:2sys_adding4}]{\begin{tabular}{cc}
			\includegraphics[width=0.40\columnwidth]{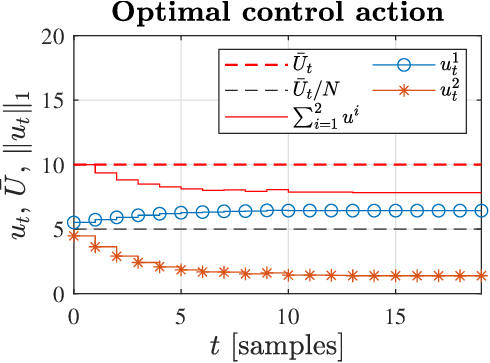} &
			\includegraphics[width=0.40\columnwidth]{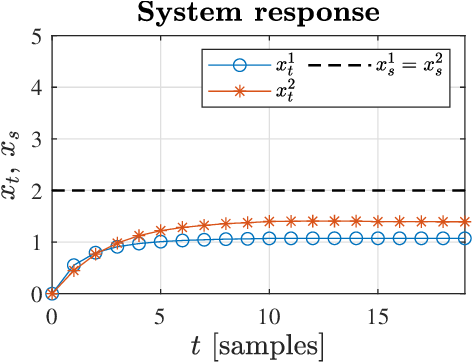}
	\end{tabular}}
	\end{tabular}
	\caption{Two-system example: control actions and systems' states \emph{vs} control strategy. In all cases weights are constant and pre-fixed.}
	\label{fig:2sys_adding}
\end{figure}
\begin{table}[!hp]
	\caption{Two-system example: performance indexes \emph{vs} control strategy.}\label{Tab:2ag_perf}
	\centering
	\begin{tabular}{|c|c|c|c|}
		\cline{2-4}
		\multicolumn{1}{c|}{} & $\mathcal{H}_{s}$ \eqref{eq:tracking_index2} & $\mathcal{H}_{u}$ \eqref{eq:equality_index} & $\mathcal{H}_{e}$ \eqref{eq:equity_index}\\
		\hline 
		Performance only &\textbf{0.683} & 0.497 & 0.744 \\
		\hline
		Performance+equality &0.316 & \textbf{0.945} & 0.814\\
		\hline
		Performance+equity & 0.565 & 0.441 & \textbf{0.999}\\
		\hline 
		Fair-MPC & 0.464 & 0.529 & 0.876 \\
		\hline
	\end{tabular}
\end{table}

\paragraph{Performance only} Let us initially set $\rho$ and $W$ to zero for both systems to solely look at the tracking performance to design the control action. As shown in \figurename{~\ref{fig:2sys_adding1}}, the constraint in \eqref{eq:shared_res} is always active even if both systems do not attain their targets. Meanwhile, the initial control effort of $\mathcal{S}_{2}$ is greater than the one of $\mathcal{S}_{1}$, consequently allowing the second system to approach the target faster than the other. Most of control resources are later on invested into the first system, which still does not attain the target. Clearly, a strategy focused on tracking only leads to huge disparities between the systems, resulting in an unfair controller, as further highlighted by the performance indexes reported in \tablename{~\ref{Tab:2ag_perf}}. 
\paragraph{Performance+equality} By setting $\rho$ as in \tablename{~\ref{tab:2systems_param}}, while imposing $W=0$, we now assess the effect of equality on the systems' performance. As visible in \figurename{~\ref{fig:2sys_adding2}}, penalizing inequalities in addition to individual performance results in control actions that lean towards the average $\frac{\bar{U}}{2}$, to provide all systems with equal opportunities. As a consequence, on the one hand, $\mathcal{S}_{1}$ is fed with an input that does not allow it to achieve its target, while the control resources are not fully exploited throughout the considered time span. On the other hand, the input of $\mathcal{S}_{2}$ makes it exceed the chosen equilibrium point, while slowing down its initial convergence. As confirmed by the equity index $\mathcal{H}_{e}$ reported in \tablename{~\ref{Tab:2ag_perf}}, a \emph{blind} egalitarian allocation of resources eventually exacerbates differences among the systems, in this case increasing the disparities between their responses. 
\paragraph{Performance+equity}     
By imposing $\rho=0$ and keeping $W$ as in \tablename{~\ref{tab:2systems_param}}, we then study the effect of equity on closed-loop performance. From the results reported in \figurename{~\ref{fig:2sys_adding3}} it is clear that the introduction of equity mitigates performance disparities among the systems, at the price of a slight deterioration in the tracking performance of $\mathcal{S}_{2}$. Meanwhile, this strategy creates inequalities in the allocation of control resources, as it tends to support under-performing systems over over-performing ones. These considerations are supported by the tracking and equality indexes shown in \tablename{~\ref{Tab:2ag_perf}}, highlighting a slight deterioration in individual performance (see $\mathcal{H}_{e}$). 
\paragraph{Fair-MPC} We now combine the effects of equality and equity, by designing the Fair-MPC law with the parameters reported in \tablename{~\ref{tab:2systems_param}}. As shown in both \figurename{~\ref{fig:2sys_adding4}} and \tablename{~\ref{tab:2systems_param}}, the inclusion of both fairness principles in control design allows us to obtain a slightly more even distribution of control resources with respect to the purely tracking case, while balancing the improvement in the performance of $\mathcal{S}_{1}$ and the slight deterioration of the ones of $\mathcal{S}_{2}$ characterizing the strategy solely looking at performance and equity.  
\paragraph{An outlook on closed-loop disparities} 
\begin{table}[!hp]
	\caption{Two-system example: tracking index in \eqref{eq:individual_index} \emph{vs}  strategy.}\label{Tab:2agen_single}
	\centering
	\begin{tabular}{|c|c|c|}
		\cline{2-3}
		\multicolumn{1}{c|}{} & $\mathcal{H}_{s}^{1}$ & $\mathcal{H}_{s}^{2}$\\
		\hline
		Performance only & 0.523 & 0.893\\
		\hline
		Individual performance+equality & 0.321 & 0.310\\
		\hline
		Individual performance+equity & 0.565 & 0.565\\
		\hline
		Fair-MPC & 0.395 & 0.545\\
		\hline
	\end{tabular}
\end{table}
By additionally considering the individual indexes
\begin{equation}\label{eq:individual_index}
	\mathcal{H}_{s}^{i}=e^{-\|\mathbf{x}_{s}^{i}-\mathbf{x}_{T-1}^{i}\|_{2}},
\end{equation}       
reported in \tablename{~\ref{Tab:2agen_single}}, all the results related to individual closed-loop performance highlighted in the previous discussions are even more evident. Indeed, while looking at individual performance in control design maintains the \textquotedblleft natural\textquotedblright \ disparity between systems, introducing equity demolishes such difference at the price of a drop in performance of $\mathcal{S}_{2}$. Meanwhile introducing equality instead of equity results in a control action that preserves differences between systems, while causing a general degradation of their tracking performance. This drop is experienced also when deploying the Fair-MPC scheme, but (as imposed) discrepancies between agents are reduced.   
\subsubsection{Fair-MPC with auto-tuning}

Let us now increase the overall resources available at each time instant to $\bar{U}=20$ to focus on a case where the systems can actually attain their targets (see \figurename{~\ref{fig:2sys_FAIR1}}). In this scenario, excessively encouraging an equal distribution of the control resources can be counterproductive, since it might lead the two systems to poorly attain their targets or/and increase the disparity between them. Meanwhile, prompting equity might only result in a waste of resources and their unfair distribution. We thus evaluate the benefits of adapting the weights of Fair-MPC as proposed in Section~\ref{subsec:weights}, when compared with using prefixed penalties. To this end, we fix all the parameters to the ones in \tablename{~\ref{tab:2systems_param}} except for $\bar{U}$, $\bar{\rho}$ and $\bar{W}$ and we consider two possible variants of \eqref{eq:varying_weights1}, subsequently denoted as \emph{case A} and \emph{case B}. While in both cases we select $\bar{t}$ as the instant at which the following relationship is verified:
\begin{equation}\label{eq:t_bar}
	x_{s}^{i}-x_{t}^{i}<0,~~~~\bar{t}-0.2T \leq t \leq \bar{t}
\end{equation} 
for at least one of the systems, thus reducing the relative importance of equality when a system is consistently above its target, in \emph{case B} we also neglect the halving of $\bar{\rho}_{t}^{i}$ in \eqref{eq:varying_weights1}, for $i=1,2$. As it can be clearly deduced from the results shown in \figurename{~\ref{fig:2sys_FAIR2}}-\ref{fig:2sys_FAIR3} and the indexes reported in \tablename{~\ref{tab:2ag_halving}}, the introduction of time-varying weights allows us to balance the counteract long-term effects of the equality-inducing penalty \eqref{eq:equality_cost}, which naturally tends to steer the inputs toward the same value (hence deteriorating the individual tracking performance). At the same time, the weights considered in \emph{case A} allow Fair-MPC to progressively phase out the initial overshoot with respect to the reference characterizing $\mathcal{S}_{2}$. Instead, the penalties considered in \emph{case B} do not account for the redundancy of the available resources, leading to individual control actions that are excessive with respect to the actual needs of each system once again due to the effect of the equality-oriented term in \eqref{eq:cost_divided}.       
\begin{figure}[!hp]
	\centering
	\begin{tabular}{c}
		\subfigure[Performance only with $\bar{U}=20$ and prefixed weights \label{fig:2sys_FAIR1}]{
			\begin{tabular}{cc}
				\includegraphics[width=0.40\columnwidth]{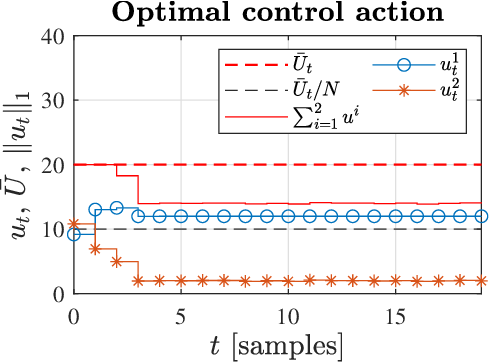} 
				&
				\includegraphics[width=0.40\columnwidth]{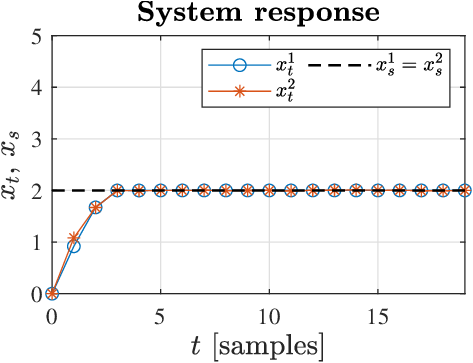}
		\end{tabular}}\\
		\subfigure[Fair-MPC with $\bar{U}=20$ and prefixed weights\label{fig:2sys_FAIR2}]{
			\begin{tabular}{cc}
				\includegraphics[width=0.40\columnwidth]{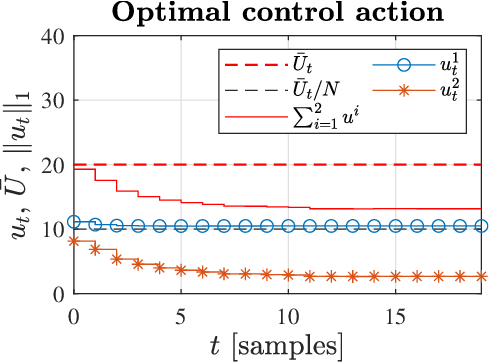} 
				&
				\includegraphics[width=0.40\columnwidth]{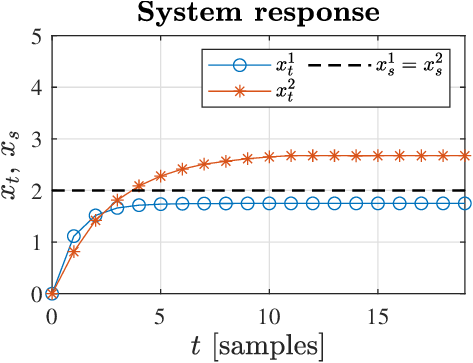}
		\end{tabular}}\\
		\subfigure[Fair-MPC with $\bar{U}=20$ and auto-tuned weights\label{fig:2sys_FAIR3}]{
			\begin{tabular}{cc}
			\includegraphics[width=0.40\columnwidth]{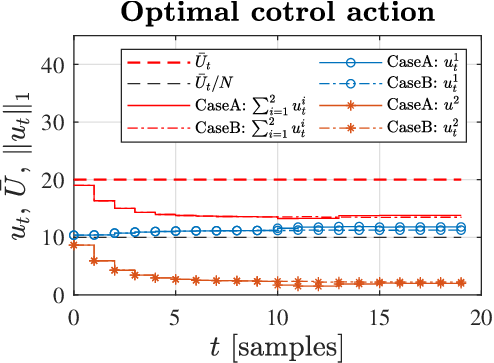} &
			\includegraphics[width=0.40\columnwidth]{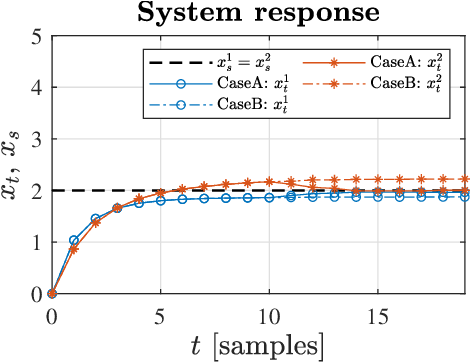}
		\end{tabular}}
	\end{tabular}
	\caption{Two-system example: control actions and systems' states \emph{vs} tuning strategy.}
	\label{fig:2sys_FAIR}
\end{figure}
\begin{table}[!hp]
	\caption{Two-system example: performance \emph{vs} tuning strategy.}	\label{tab:2ag_halving}
	\centering
	\begin{tabular}{|c|c|c|c|}
		\cline{2-4}
		\multicolumn{1}{c|}{} & \hspace*{-.1cm}$\mathcal{H}_{s}$\hspace*{-.1cm} \eqref{eq:tracking_index2} \hspace*{-.1cm}&\hspace*{-.1cm} $\mathcal{H}_{u}$ \eqref{eq:equality_index} \hspace*{-.1cm}&\hspace*{-.1cm} $\mathcal{H}_{e}$ \eqref{eq:equity_index}\\
		\hline
		Performance only+fixed weights & 0.999 & 0.399 & 0.994 \\
            \hline
		Fair-MPC+fixed weights & 0.630 & 0.584 & 0.851 \\
		\hline
		Fair-MPC+auto-tuned weights (Case A) & \textbf{0.980} & \textbf{0.441} & \textbf{0.975}\\
		\hline
		Fair-MPC+auto-tuned weights (Case B) & 0.840 & 0.480 & 0.960\\
		\hline
	\end{tabular}
\end{table}
\subsubsection{Fair-MPC on unstable open-loop systems} 

Both the systems considered so far are open-loop stable (see \eqref{eq:two_systems}). We thus shift to two new systems, \emph{i.e.,}
\begin{equation}\label{eq:two_systems2}
\mathcal{S}^{1}\!\!:~x_{t+1}^{1}=0.5x_{t}^{1}+0.1u_{t}^{1},~~~\mathcal{S}^{2}\!\!:~x_{t+1}^{2}=1.5x_{t}^{2}+0.1u_{t}^{2},
\end{equation}   
so that one of the two ($\mathcal{S}^{2}$) is open-loop unstable. 
To stabilize both systems to the origin, we now evaluate the impact of progressively introducing fairness in control design, with an eye on checking closed-loop stability as well as the attained performance. Note that, in this new scenario, the hyper-parameters of Fair-MPC are set as in \tablename{~\ref{tab:2systems_param2}}.

\figurename{~\ref{fig:2sys_adding_2}} shows that the states and inputs of both the considered systems converge after the transient, thus showing that the proposed scheme allows us to stabilize the system despite the introduction of our fairness-driven terms in the cost of \eqref{eq:F-MPC}. At the same time, these results highlight that zero-state is reached both when tracking only is penalized and when equity is introduced in the design cost. Nonetheless, in this second case the transient behavior of $\mathcal{S}^{2}$ is slightly modified, with the designed input visibly acting to reduce the differences between the systems' states. Meanwhile, it is clear that incorporating only equality reduces discrepancies between the individual optimal inputs during the transient, at the price of making the systems' states not converge to zero. While not exactly allowing for convergence to zero either, Fair-MPC achieves the desired trade-off between individual performance, equality and equity (see, \emph{e.g.,} the inputs and states during the transient). These results are further confirmed by the indexes in \tablename{~\ref{Tab:2ag_perf2}}.   

\begin{table}[!hp]
	\caption{Two-system example (with open-loop unstable): parameters of the Fair-MPC problem.}\label{tab:2systems_param2}
	\centering
	\begin{tabular}{cccccccccc}
		  $L$ & $\bar{U}$ & $Q$ & $\bar{\rho}$ & $\gamma_{u}$ & $\bar{W}$ & $\Gamma_{e}$& $\beta$ &$\lambda_x$&$\lambda_u$\\
		\hline 
		\hline
		 20 & 10 & 1 & 1 & $10^{-2}$ &  1 & $1$ & 0.1 & 0.1 & 0.1\\
		\hline
	\end{tabular}
\end{table}
\begin{figure}[!hp]
	\centering
	\begin{tabular}{c}
	\subfigure[Performance only: $J_{L}^{\mathrm{fair}}(\mathbf{\tilde{u}},\mathbf{\tilde{x}},\bar{U},\mathbf{x}_s)=J_{p}(\mathbf{\tilde{x}},\mathbf{x}_{s})$\label{fig:2sys_stability1}]{\begin{tabular}{cc}
		\includegraphics[width=0.40\columnwidth]{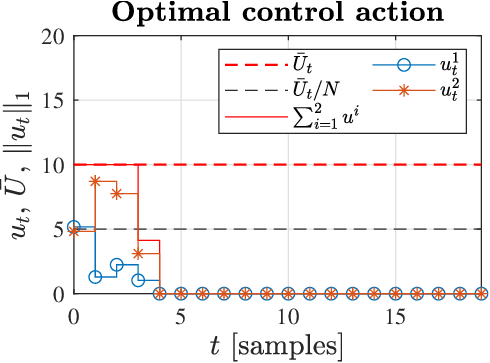} &
		\includegraphics[width=0.40\columnwidth]{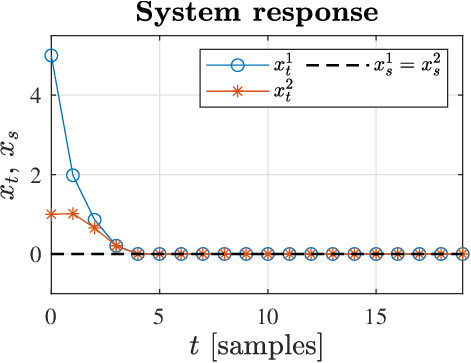}
	\end{tabular}}\\
	\subfigure[Performance+equality: $J_{L}^{\mathrm{fair}}(\mathbf{\tilde{u}},\mathbf{\tilde{x}},\bar{U},\mathbf{x}_s)=J_{p}(\mathbf{\tilde{x}},\mathbf{x}_{s})+J_{u}(\tilde{u},\bar{U})$\label{fig:2sys_stability2}]{\begin{tabular}{cc}
			\includegraphics[width=0.40\columnwidth]{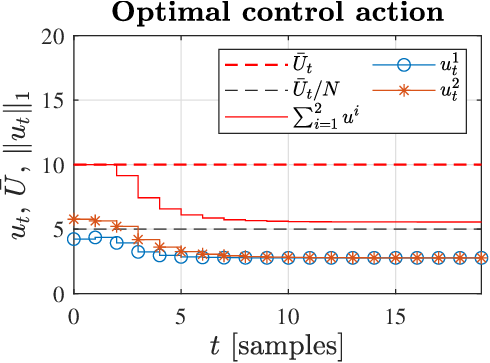} &
			\includegraphics[width=0.40\columnwidth]{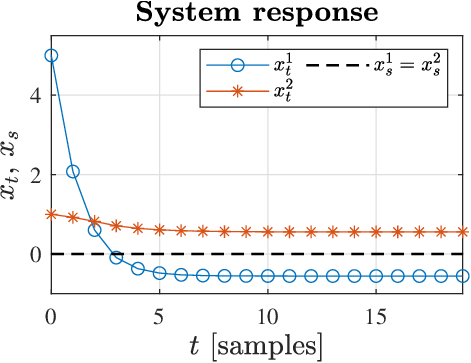}
	\end{tabular}}\\
	\subfigure[Performance+equity: $J_{L}^{\mathrm{fair}}(\mathbf{\tilde{u}},\mathbf{\tilde{x}},\bar{U},\mathbf{x}_s)=J_{p}(\mathbf{\tilde{x}},\mathbf{x}_{s})+J_{e}(\mathbf{\tilde{x}},\mathbf{x}_{s})$\label{fig:2sys_stability3}]{\begin{tabular}{cc}
			\includegraphics[width=0.40\columnwidth]{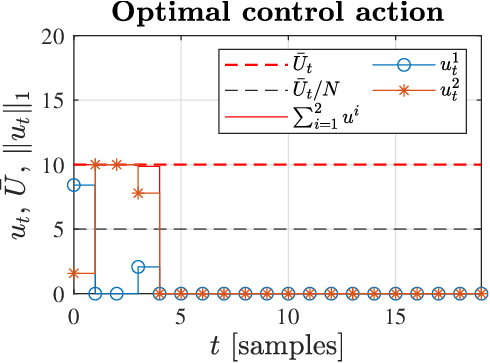} &
			\includegraphics[width=0.40\columnwidth]{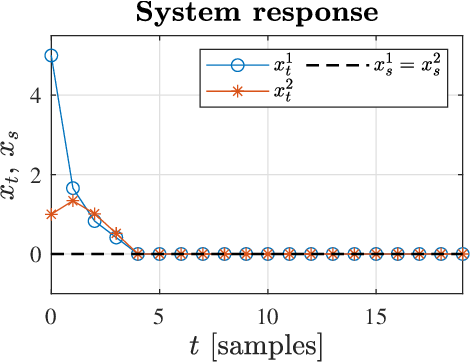}
	\end{tabular}}\\
	\subfigure[Fair-MPC: $J_{L}^{\mathrm{fair}}(\mathbf{\tilde{u}},\mathbf{\tilde{x}},\bar{U},\mathbf{x}_s)$ in \eqref{eq:cost_divided}\label{fig:2sys_stability4}]{\begin{tabular}{cc}
			\includegraphics[width=0.40\columnwidth]{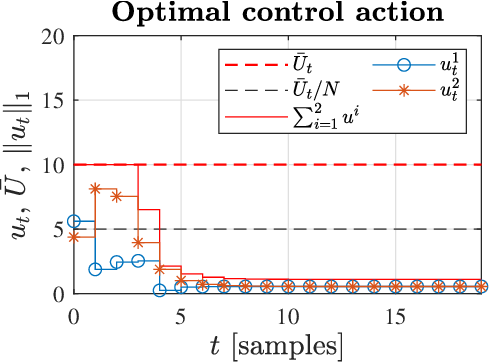} &
			\includegraphics[width=0.40\columnwidth]{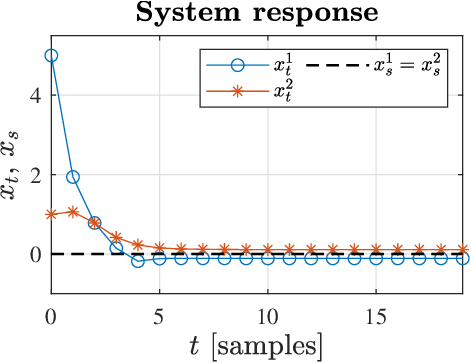}
	\end{tabular}}
	\end{tabular}
	\caption{Two-system example (with open-loop unstable): control actions and systems' states \emph{vs} control strategy.}
	\label{fig:2sys_adding_2}
\end{figure}
\begin{table}[!hp]
	\caption{Two-system example (with open-loop unstable): performance \emph{vs} strategy.}\label{Tab:2ag_perf2}
	\centering
	\begin{tabular}{|c|c|c|c|}
		\cline{2-4}
		\multicolumn{1}{c|}{} & $\mathcal{H}_{s}$ \eqref{eq:tracking_index2} & $\mathcal{H}_{u}$ \eqref{eq:equality_index} & $\mathcal{H}_{e}$ \eqref{eq:equity_index}\\
		\hline 
		Performance only & \textbf{1.000} & 0.605 & 0.971 \\
		\hline
		Performance+equality &0574 & \textbf{0.990} & 0.939\\
		\hline
		Performance+equity & 0.999 & 0.820 & \textbf{0.985}\\
		\hline 
		Fair-MPC & 0.895 & 0.896 & 0.969 \\
		\hline
	\end{tabular}
\end{table}
\subsection{Motion on a bi-dimensional space}
Consider now a set of systems moving on a bi-dimensional space, whose individual behavior is described by the following dynamics:
\begin{subequations}\label{eq:motion}
\begin{align}
	&\mathbf{x}_{t+1}^{i}=\begin{bmatrix}
		1 & 0 & 1 & 0\\
		0 & 1 & 0 & 1\\
		0 & 0 & 1 & 0\\
		0 & 0 & 0 & 1
	\end{bmatrix}\mathbf{x}_{t}^{i}+\begin{bmatrix}
	0 & 0\\
	0 & 0\\
	[\mathbf{b}^{i}]_{1} & 0\\
	0 & [\mathbf{b}^{i}]_{2}
	\end{bmatrix}\mathbf{u}_{t}^{i},
\end{align}
\end{subequations}
where the first two states indicate the position of each system over time, while the remaining ones are their velocities, and $[\mathbf{b}^{i}]_{j}$ characterize the peculiar effect that the $j$-th component of the control action has on the evolution of the $i$-th system. Note that higher values of these parameters imply that the performance of the corresponding system(s) is considerably shaped by the external inputs. Accordingly, in what follows we denote the systems with higher $[\mathbf{b}^{i}]_{j}$ as \emph{influenced}, while the other are called \emph{refrained}. Moreover, for the sake of the readability of subsequent results, we introduce the vector containing only the positions of the analyzed systems, namely
\begin{equation}\label{eq:position_def}
\mathbf{p}_{t}^{i}=\begin{bmatrix}
    [\mathbf{x}_{t}^{i}]_{1} & [\mathbf{x}_{t}^{i}]_{2}
\end{bmatrix}^{\top}.
\end{equation}

In our analysis, we assume that the systems aim at moving towards their individual target equilibrium positions $\mathbf{p}_{s}^{i}\in \mathbb{R}^{2}$ from the origin of the 2-dimensional plane, finally stopping on the equilibrium position itself. Therefore, the equilibrium state is always given by  
\begin{equation*}
\mathbf{x}_{s}^{i}=\begin{bmatrix}(\mathbf{p}_{s}^{i})^{\top} & 0 & 0\end{bmatrix},~~~i=1,\ldots,N.
\end{equation*}
The performance of Fair-MPC is assessed over a simulation span of $T=20$ steps, while automatically tuning the weights as in \eqref{eq:varying_weights}, and choosing $\bar{t}$ as in \eqref{eq:t_bar}. Lastly, we only show the 1-norms of states and inputs over time, to allow for an easier visual inspection of the attained closed-loop behavior.

\subsubsection{The two-system case}

Let us initially consider a simpler case, in which only two systems ($N=2$) have to be controlled. As shown in \tablename{~\ref{tab:2d_2systems_data}}, one of two systems is advantaged with respect to the other, given its dynamics and the proximity of the initial position to the target equilibrium.  
\begin{table}[!hp]
\caption{Bi-dimensional motion (two-system case): features and targets.}\label{tab:2d_2systems_data}
\centering
\begin{tabular}{|c|c|c|}
	\cline{2-3}
	\multicolumn{1}{c|}{} & $\mathbf{b}^{i}$ & $\mathbf{p}_{s}^{i}$\\
	\hline
	\emph{refrained} system & $\begin{bmatrix}
		0.2 & 0.2
	\end{bmatrix}$ & $\begin{bmatrix}
	10 & -13
\end{bmatrix}$\\
	\hline
	\emph{influenced} system & $\begin{bmatrix}
		1 & 1
	\end{bmatrix}$ &$\begin{bmatrix}
		-7 & 2
	\end{bmatrix}$\\
\hline
\end{tabular}
\end{table}
\begin{table}[!hp]
	\caption{Bi-dimensional motion (two-system case): parameters of the Fair-MPC problem.}\label{tab:2d_param}
	\centering
	\begin{tabular}{ccccccc}
		$L$ & $Q$ & $\gamma_{u}$ & $\Gamma_{e}$& $\beta$ & $\lambda_x$ & $\lambda_u$\\
		\hline 
		\hline
		10 & 1 & $10^{-1}$ & $10^{1}$ &0.1 & 0.1 & 0.1\\
		\hline
	\end{tabular}
\end{table}
By considering the parameters in \tablename{~\ref{tab:2d_param}}, we now analyze how the different choices for $\bar{U}_{t}$ in \eqref{eq:shared_res} (see the discussion in section~\ref{sec:properties}) shape the obtained closed-loop performance and the associated control actions. 
\subsubsection{Uniformly allocated resources} 
We now focus on the scenario where $\bar{U}_{t}=20$ for all $t \geq 0$, namely control resources are unlimited and they are uniformly allocated over time. As shown in \figurename{~\ref{fig:2d_2systems}}, when penalizing performance only the \emph{influenced} system attains its reference slightly faster than the \emph{refrained} one. Meanwhile, little control resources are generally allocated to the first system because of its closeness to its target. Instead, when Fair-MPC is used, the control effort of each system is more balanced, causing the elongation in the response of the \emph{influenced} system with respect to its target. In turn, this phenomenon slows down the convergence of this system, allowing both systems to achieve their goal at approximately the same time instant and, ultimately, leading to a fairer controller, as confirmed by the performance indexes reported in \tablename{~\ref{tab:2d_2systems_KPI}}. 
\begin{table}[!hp]
	\caption{Bi-dimensional motion in closed-loop (two systems with unlimited resources): performance indexes for $\alpha_{\%}=10\%$ in \eqref{eq:tau} \emph{vs} control strategy.}\label{tab:2d_2systems_KPI}
	\centering
	\begin{tabular}{|c|c|c|c|c|}
		\cline{2-5}
		\multicolumn{1}{c|}{} & $\mathcal{H}_{s}$ \eqref{eq:tracking_index2} & $\mathcal{H}_{\tau}$ \eqref{eq:tracking_time}& $\mathcal{H}_{u}$ \eqref{eq:equality_index} & $\mathcal{H}_{e}$ \eqref{eq:equity_index}\\
		\hline
		Performance only &1.000 & \textbf{0.750} & 0.326 & 0.449\\
		\hline
		Fair-MPC &1.000 & 0.700 & \textbf{0.487} & \textbf{0.646} \\
		\hline
	\end{tabular}
\end{table}
\begin{figure}[!hp]
	\centering
	\begin{tabular}{c}
		\subfigure[1-norm of the systems' inputs and positions]{\begin{tabular}{cc}
			\includegraphics[width=0.40\columnwidth]{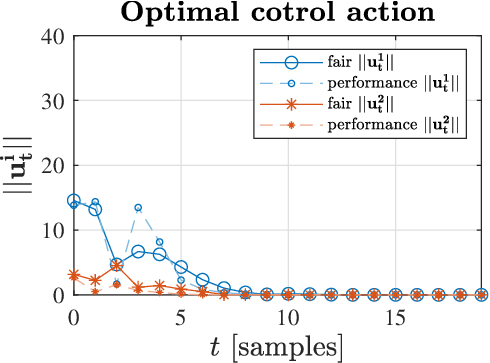} &		\includegraphics[width=0.40\columnwidth]{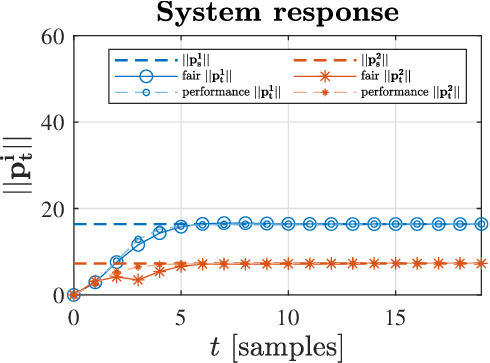} \\
		\end{tabular}}\\
		\subfigure[Motion on the bi-dimensional plane]{\includegraphics[width=0.4\columnwidth]{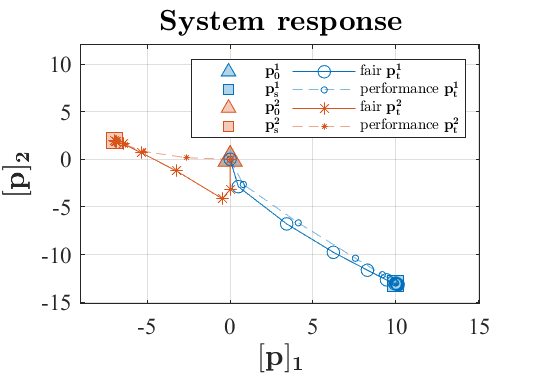}}
	\end{tabular}
	\caption{Bi-dimensional motion in closed-loop (two systems with unlimited resources): Strategy penalizing performance only \emph{vs} Fair-MPC.}\label{fig:2d_2systems}
\end{figure}

\subsubsection{Exhaustible resources} 
Suppose now that the available control resources get reduced depending on their usage over time, starting from\footnote{When the initial budget is lower than $\bar{U}=200$, the tracking policy under-performs Fair-MPC even in terms of individual performance.} $\bar{U}_{0}=200$. As shown in \figurename{~\ref{fig:2d_2systemsEX}}, both control strategies allow the systems to attain their targets. At the same time, more control resources are initially exploited when considering tracking only. This result is particularly relevant in the considered setting, as resources decrease over time and it thus fundamental to preserve them as much as possible. Note that, as expected, having limited resources leads to a change in the trajectories of the two systems with respect to the ones followed in \figurename{~\ref{fig:2d_2systems}}. Meanwhile, as for the case of unlimited resources, the use of Fair-MPC tends to slightly slow down the convergence of the stronger system to its target, for the benefit of an increased fairness among the systems (see the indexes in \tablename{~\ref{tab:2d_2systemsEX_KPI}}). 
\begin{table}[!hp]
	\caption{Bi-dimensional motion in closed-loop (two systems with exhaustible resources): performance indexes with $\alpha_{\%}=10\%$ in \eqref{eq:tau} \emph{vs} control strategy.}\label{tab:2d_2systemsEX_KPI}
	\centering
	\begin{tabular}{|c|c|c|c|c|}
		\cline{2-5}
		\multicolumn{1}{c|}{} & $\mathcal{H}_{s}$ \eqref{eq:tracking_index2} & $\mathcal{H}_{\tau}$ \eqref{eq:tracking_time}& $\mathcal{H}_{u}$ \eqref{eq:equality_index} & $\mathcal{H}_{e}$ \eqref{eq:equity_index}\\
		\hline
		Performance only &1.000 & \textbf{0.825} & 0.176 & 0.630\\
		\hline
		Fair-MPC &1.000 & 0.800 & \textbf{0.365} & \textbf{0.703} \\
		\hline
	\end{tabular}
\end{table}
\begin{figure}[!hp]
	\centering
	\begin{tabular}{c}
		\subfigure[1-norm of the systems' inputs and outputs]{\begin{tabular}{cc}
			\includegraphics[width=0.40\columnwidth]{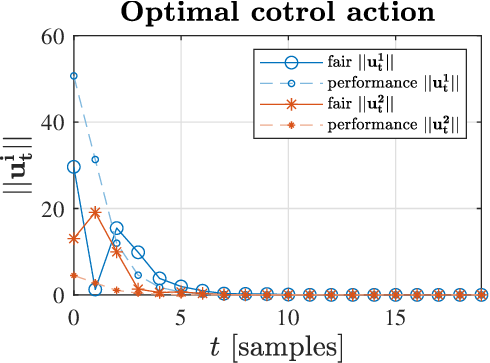} &	\includegraphics[width=0.40\columnwidth]{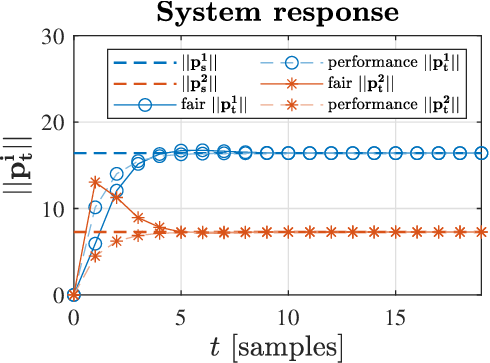} \\
		\end{tabular}}\\
		\subfigure[Motion on the bi-dimensional plane]{\includegraphics[width=0.4\columnwidth]{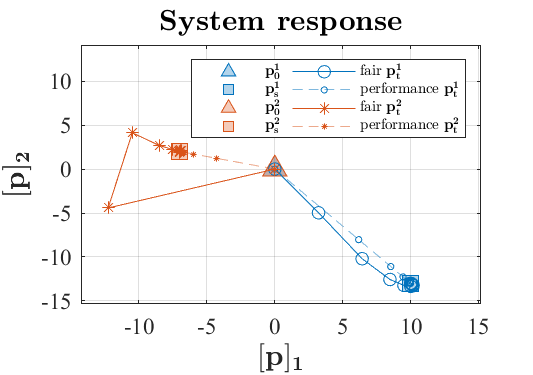}}
	\end{tabular}
	\caption{Bi-dimensional motion in closed-loop (two systems with exhaustible resources): strategy penalizing performance only \emph{vs} Fair-MPC.}\label{fig:2d_2systemsEX}
\end{figure}

\subsection{The two-class case} 

We now increase the number of systems up to $N=8$, with the systems' dynamics in \eqref{eq:motion} characterized by the parameters in \tablename{~\ref{tab:2d_2class_param}}. These systems can be distinguished into two classes based on their capabilities of exploiting resources (\emph{i.e.,} the values of the components of $\mathbf{b}_{i}$) and their distance to the target\footnote{The initial position of all systems is still located at the origin of the plane.}. Within this setting, we impose $\bar{U}=200$ and assume the availability of unlimited control resources. Note that, we design Fair-MPC using the formulation in \eqref{eq:class_based}, by lowering $L$ to $10$, to reduce the computational time. 
\begin{table}[!hp]
	\caption{Bi-dimensional motion (two-class case): parameters of the Fair-MPC problem.}\label{tab:2d_2class_param}
	\centering
	\begin{tabular}{ccccccc}
		$L$ & $Q$ & $\gamma_{u}$ & $\Gamma_{e}$& $\beta$ & $\lambda_x$& $\lambda_u$\\
		\hline 
		\hline
		10 & 1 & $10^{-1}$ & $10^{1}$ & 0.1& 0.1 & 0.1 \\
		\hline
	\end{tabular}
\end{table}
\begin{table}[!hp]
	\caption{Bi-dimensional motion (two-class case): systems' features and targets.}\label{tab:2d_2class_data}
	\centering
	\begin{tabular}{|c|c|c|c|}
		\cline{2-4}
		\multicolumn{1}{c|}{} & $\mathbf{b}^{i}$ & $\mathbf{p}_{s}^i$ & Class\\
		\hline
		system 1 & $\begin{bmatrix}
			0.2 & 0.2
		\end{bmatrix}$ & $\begin{bmatrix}
			10 & -13
		\end{bmatrix}$ & refrained ($C_1$)\\
		\hline
		system 2  & $\begin{bmatrix}
			0.19 & 0.19
		\end{bmatrix}$ &$\begin{bmatrix}
			-7 & 2
		\end{bmatrix}$ & refrained ($C_1$)\\
		\hline
				system 3  & $\begin{bmatrix}
			0.194 & 0.194
		\end{bmatrix}$ &$\begin{bmatrix}
			6 & 3
		\end{bmatrix}$ & refrained ($C_1$)\\
		\hline
				system 4  & $\begin{bmatrix}
			0.186 & 0.186
		\end{bmatrix}$ &$\begin{bmatrix}
			 8 & -4
		\end{bmatrix}$ & refrained ($C_1$)\\
		\hline
				system 5  & $\begin{bmatrix}
			1 & 1
		\end{bmatrix}$ &$\begin{bmatrix}
			2 & -5
		\end{bmatrix}$ & influenced ($C_2$)\\
		\hline
				system 6  & $\begin{bmatrix}
			0.95 & 0.95
		\end{bmatrix}$ &$\begin{bmatrix}
			1 & 2
		\end{bmatrix}$ & influenced ($C_2$)\\
		\hline
				system 7  & $\begin{bmatrix}
			0.97 & 0.97
		\end{bmatrix}$ &$\begin{bmatrix}
			-10 & -13
		\end{bmatrix}$ & influenced ($C_2$)\\
		\hline
				system 8  & $\begin{bmatrix}
			0.93 & 0.93
		\end{bmatrix}$ &$\begin{bmatrix}
			-4 & -1
		\end{bmatrix}$ & influenced ($C_2$)\\
		\hline
	\end{tabular}
\end{table}
The performance of Fair-MPC over a simulation horizon of $T=20$ steps is compared to that of performance-only strategies in \figurename{~\ref{fig:2d_2class}}. Also in this scenario, the effects of introducing fairness into control design is evident. First of all, Fair-MPC leads to a more efficient use of the available budget, especially at the beginning of the simulation horizon. In particular, considering fairness allows us to allocate more control resources to the systems belonging to the \emph{refrained} cluster, while enabling all the systems to attain their tasks at the same time. This result was instead more difficult to achieve with performance-only Fair-MPC, due to the inherent disparities among systems. Note that, to achieve fairness the trajectories of systems closer to their target are slightly deviated also in this scenario. The gain in fairness is thus paired with a sight deterioration in the promptness of target tracking, as proven by the indexes reported in \tablename{~\ref{tab:2d_2class_KPI}}.  
\begin{figure}[!hp]
	\centering
	\begin{tabular}{c}
		\subfigure[1-norm of the inputs for the two classes]{\hspace*{-.5cm}\begin{tabular}{cc}
				\includegraphics[width=0.465\columnwidth]{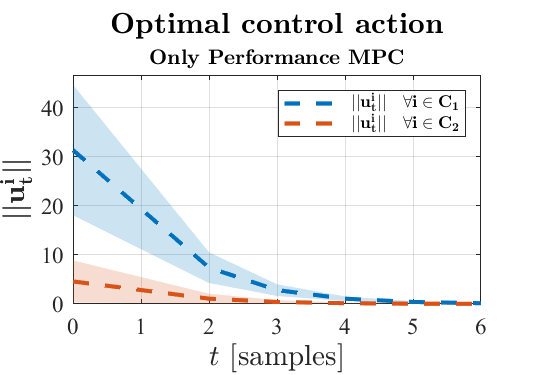} &		\includegraphics[width=0.465\columnwidth]{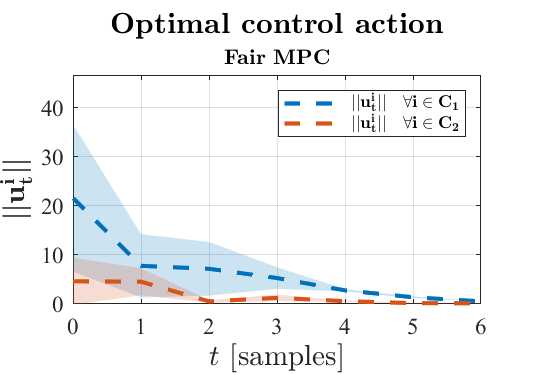} \\
		\end{tabular}}\\
		\subfigure[1-norm of the positions for the two classes]{\hspace*{-.5cm}\begin{tabular}{cc}
				\includegraphics[width=0.465\columnwidth]{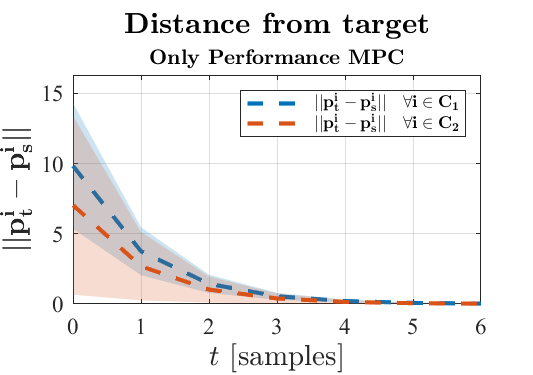} &		\includegraphics[width=0.465\columnwidth]{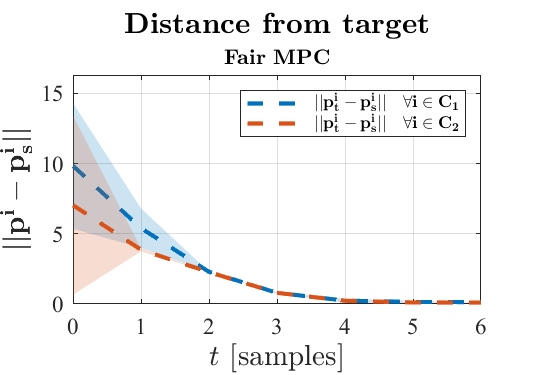} \\
		\end{tabular}}\\
		\subfigure[Motion on the bi-dimensional plane]{\hspace*{-.5cm}\begin{tabular}{cc}
		\includegraphics[width=0.41\columnwidth]{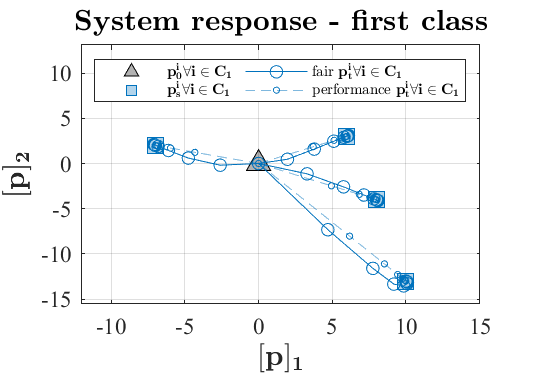} \hspace*{-.3cm}&\hspace*{-.3cm}		\includegraphics[width=0.41\columnwidth]{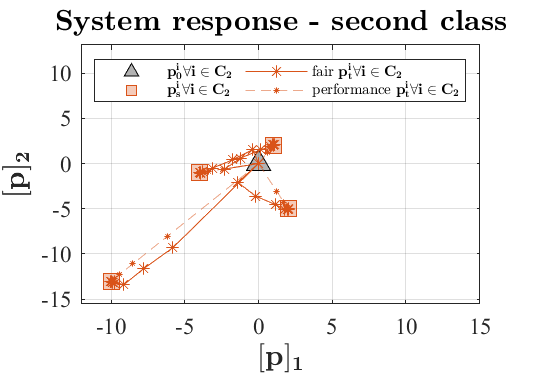} \\
	\end{tabular}}
	\end{tabular}
	\caption{Bi-dimensional motion in closed-loop (two-class): strategy penalizing performance only\emph{vs} Fair-MPC. In the first two blocks of plots, we report the mean (dashed line) and standard deviation (shaded area) of the inputs' and positions' 1-norm.}\label{fig:2d_2class}
\end{figure}
\begin{table}[!hp]
	\caption{Bi-dimensional motion in closed-loop (two-class case): performance indexes with $\alpha_{\%}=10\%$ in \eqref{eq:tau} \emph{vs} strategy.}\label{tab:2d_2class_KPI}
	\centering
	\begin{tabular}{|c|c|c|c|c|}
		\cline{2-5}
		\multicolumn{1}{c|}{} & $\mathcal{H}_{s}$ \eqref{eq:tracking_index2} & $\mathcal{H}_{\tau}$ \eqref{eq:tracking_time}& $\mathcal{H}_{u}$ \eqref{eq:equality_index} & $\mathcal{H}_{e}$ \eqref{eq:equity_index}\\
		\hline
		Performance only &1.000 & \textbf{0.800} & 0.499 & 0.676\\
		\hline
		Fair-MPC &1.000 & 0.768 & \textbf{0.515} & \textbf{0.733} \\
		\hline
	\end{tabular}
\end{table}
\section{Conclusions}
We have presented Fair-MPC, a preliminary approach to explicitly account for fairness in constrained control design. Toward this objective, we have formulated a performance-oriented predictive control problem, that also promotes equity and equality among a group of systems. Based on the peculiarities of this formulation, we have additionally proposed a tailored strategy to tune the fairness-related penalties in the Fair-MPC cost, thus alleviating the practical calibration burden of the proposed approach. The presented numerical examples have spotlighted the impact of fairness on both individual performance and on the distribution of the available control resources. Overall, we can conclude that stronger systems in the group can be subject to a slight loss in tracking performance, but this is deemed to be generally a minor limitation since a more fair closed-loop behavior is attained. Indeed, by using Fair-MPC, we have generally observed a more balanced behavior of the group and a more similar control effort required by all systems.    

Future research will be devoted to extending this preliminary formulation of Fair-MPC to more general challenging scenarios, while analyzing its additional properties and the conservativeness of the current formulation. We also aim at distributing Fair-MPC to reduce the computational burden on the central unit, while having a scheme that is resilient to centralized failures. Future work will also be focused on devising strategies to soften the requirements on equality after at least one system has reached its target, for the remaining control resources to be actually allocated to the systems that have been left behind.  

\section{Acknowledgments}
The work was partially supported by the following: Italian Ministry of Enterprises and Made in Italy in the framework of the project 4DDS (4D Drone Swarms) under grant no. F/310097/01-04/X56,  PRIN project TECHIE: “A control and network-based approach for fostering the adoption of new technologies in the ecological transition” Cod. 2022KPHA24 CUP: D53D23001320006 and by MOST – Sustainable Mobility National Research Center that received funding from the European Union NextGenerationEU (PIANO NAZIONALE DI RIPRESA E RESILIENZA (PNRR) – MISSIONE 4 COMPONENTE 2, INVESTIMENTO 1.4-D.D. 1033 17/06/2022, CN00000023), Spoke 5 “Light Vehicle and Active Mobility”.

%% Bibliography
\bibliographystyle{abbrv} 
\bibliography{main.bib}

\end{document}